\documentclass[11pt, a4paper]{article}

\usepackage[utf8]{inputenc}
\usepackage[T1]{fontenc}
\usepackage[french, english]{babel}
\usepackage[top=3.5cm, bottom=3cm, left=3.5cm, right=3.5cm]{geometry}
\usepackage{setspace} 

\usepackage{titlesec} 
\usepackage{titleps}
\usepackage[table]{xcolor}
\usepackage{makecell, array, cellspace}
\usepackage{framed} 
\usepackage[shortlabels]{enumitem}

\usepackage[french, refpage, intoc]{nomencl}
\bibliographystyle{alpha}
\makenomenclature  



\usepackage[scaled=.90]{helvet}
\DeclareRobustCommand{\myBullet}{\scalebox{1.5}{$\cdot$}}

\titleformat{\section}
{\sffamily\bfseries\Large}
{\thesection\hspace*{0.5em}\myBullet}
{0.5em}{}
  
\titleformat{\subsection}
{\sffamily\large\bfseries\color{black!75}}
{\thesubsection\hspace*{0.5em}\myBullet}
{0.5em}{}

\titleformat{\subsubsection}
{\sffamily\bfseries}
{\thesubsubsection}
{1em}{}


\renewcommand{\footnoterule}{\vfill\kern 40pt {\color{black!45}\hrule width 1.0\columnwidth height 0.25pt} \kern 2.6pt}

\newcommand\footnoteWithNoNumber[1]{%
  \begingroup
  \renewcommand\thefootnote{}\footnote{#1}%
  \addtocounter{footnote}{-1}%
  \endgroup
}

\usepackage[hang, flushmargin]{footmisc}


\usepackage{parskip}
\setlength{\parskip}{1ex}

\setlength{\parfillskip}{0pt plus 0.75\textwidth} 

\setlength{\marginparwidth}{%
\dimexpr\paperwidth - \hoffset - 1in - \oddsidemargin - \textwidth - \marginparsep\relax%
}


\interfootnotelinepenalty=10000


\newcommand*\circledSmall[1]{%
	\tikz[baseline=(char.base)]{%
		\node[shape=circle,fill=black!80,draw,inner sep=0.5pt] (char) {\hspace*{-0.5ex}\scriptsize #1};}%
}


\usepackage{amssymb}
\usepackage{amsthm} 
\usepackage{mathtools}
\usepackage{tikz-cd} 

\numberwithin{equation}{section}

\makeatletter
\renewcommand*{\eqref}[1]{%
  \hyperref[{#1}]{\textup{\tagform@{\ref*{#1}}}}%
}
\makeatother

\allowdisplaybreaks 


\newcommand{\tq}{\ensuremath{\; : \;}}

\DeclareMathOperator{\covol}{covol}
\newcommand{\discr}[1]{\ensuremath\deg\left(\Delta_{\min}(#1)\right)}
\DeclareMathOperator{\et}{\text{ét}}
\newcommand{\field}{\ensuremath\F_{3^{2nj}}}
\newcommand{\fieldJ}{\ensuremath\F_{q^{2j}}}
\DeclareMathOperator{\Gal}{Gal}
\DeclareMathOperator{\im}{Im}
\newcommand*{\into}{\hookrightarrow}
\newcommand*{\longinto}[1][\quad]{\;\lhook\joinrel\xrightarrow{#1}\;}

\newcommand*{\minus}{\smallsetminus}

\DeclareMathOperator*{\ord}{ord} 
\DeclareMathOperator{\Reg}{Reg}
\DeclareMathOperator{\rk}{rk}
\DeclareMathOperator{\sep}{sep}
\DeclareMathOperator{\tors}{tors}
\DeclareMathOperator{\vol}{vol}

\newcommand*{\A}{\mathbb{A}} 
\newcommand*{\C}{\mathbb{C}}
\newcommand*{\F}{\mathbb{F}}
\newcommand*{\Pl}{\mathbb{P}} 
\newcommand*{\Q}{\mathbb{Q}}
\newcommand*{\R}{\mathbb{R}}
\newcommand*{\Z}{\mathbb{Z}}

    \DeclareFontFamily{U}{wncy}{}
    \DeclareFontShape{U}{wncy}{m}{n}{<->wncyr10}{}
    \DeclareSymbolFont{mcy}{U}{wncy}{m}{n}
    \DeclareMathSymbol{\Sha}{\mathord}{mcy}{"58} 

\newcommand{\fonction}[5]{
\begin{array}{lrcl}
#1: & #2 & \longrightarrow & #3 \\
    & #4 & \longmapsto & #5
\end{array}
}


\usepackage[linktoc=all, pageanchor, backref=page, colorlinks=true]{hyperref}
\hypersetup{
    linkcolor=[RGB]{29, 90, 122},
    filecolor=[RGB]{0,120,70},
    urlcolor=[RGB]{252, 130, 43},
    citecolor=[RGB]{0, 160, 0},
    bookmarksnumbered
}

\renewcommand*{\backref}[1]{}
\renewcommand*{\backrefalt}[4]{%
    \ifcase #1 (Not cited.)%
    \or        \hspace*{\fill} 	\textcolor{gray}{($\uparrow$~#2.)}
    \else     \hspace*{\fill}  \textcolor{gray}{($\uparrow$~#2.)}
    \fi}


\usepackage[nameinlink,noabbrev]{cleveref}
\crefname{subsection}{subsection}{subsections} 
\crefname{df}{definition}{definitions}
\crefname{thm}{theorem}{theorems}
\crefname{lemme}{lemma}{lemmas}
\crefname{conj}{conjecture}{conjectures}

\renewcommand{\O}{\mathcal{O}}


\newtheoremstyle{StyleThm1}
  {10pt} 
  {10pt} 
  {} 
  {} 
  {\sffamily\bfseries\color{black}}
  {.} 
  {.5em} 
  {} 

\newtheoremstyle{StyleThm2}
  {10pt} 
  {10pt} 
  {\slshape} 
  {} 
  {\sffamily\bfseries\color{black}} 
  {.} 
  {.5em} 
  {} 

\expandafter\let\expandafter\oldproof\csname\string\proof\endcsname
\let\oldendproof\endproof
\renewenvironment{proof}[1][\proofname]{%
	\oldproof[\sffamily\selectfont\upshape\bfseries\color{black} #1 ---\nopunct]%
}{\oldendproof}

\theoremstyle{StyleThm1}
\newtheorem{df}{Definition}[section]

\newtheorem{conj}[df]{Conjecture} 

\theoremstyle{StyleThm2}
\newtheorem{thm}[df]{Theorem}
\newtheorem{prop}[df]{Proposition}
\newtheorem{lemme}[df]{Lemma}

\newtheorem{thmLetter}{Theorem}

\newtheorem{corLetter}{Corollary}

\newenvironment{rmq}
  {\pushQED{\qed}\rmqe}
  {\popQED\endrmqe}

\renewenvironment{leftbar}[1][\hsize]
{%
    \MakeFramed{\hsize#1\advance\hsize-\width\FrameRestore}%
}
{\endMakeFramed}


\usepackage{etoolbox}

\crefformat{nomencl}{#2list of symbols#3}



\newif\ifbackrefshowonlyfirst
\backrefshowonlyfirstfalse
%
\makeatletter
\let\BR@direct@old@hyper@natlinkstart\hyper@natlinkstart
\renewcommand*{\hyper@natlinkstart}{\phantomsection\BR@direct@old@hyper@natlinkstart}
\let\BR@direct@oldBR@citex\BR@citex
\renewcommand*{\BR@citex}{\phantomsection\BR@direct@oldBR@citex}%

\long\def\hyper@page@BR@direct@ref#1#2#3{\hyperlink{#3}{#1}}

\ifx\backrefxxx\hyper@page@backref
    \let\backrefxxx\hyper@page@BR@direct@ref
    \ifbackrefshowonlyfirst
    \fi
\else
    \ifbackrefshowonlyfirst
    \fi
\fi

\RequirePackage{etoolbox}
\patchcmd{\Hy@backout}{Doc-Start}{\@currentHref}{}{\errmessage{I can't seem to patch backref}}
\makeatother


\makeatletter

\def\@@@nomenclature[#1]#2#3#4{%
	{}\hypertarget{#4}
	{}\phantomsection\label{#4}
	\def\@tempa{#2}\def\@tempb{#3}\def\@tempc{#4}%
	\protected@write\@nomenclaturefile{}%
	{\string\nomenclatureentry{#1\nom@verb\@tempa @[{\nom@verb\@tempa}]%
		\begingroup\nom@verb\@tempb\protect\nomeqref{\theequation}%
			|nomlabelref}{\@tempc}}%
		\endgroup
	\@esphack
}
\makeatother

\newcommand{\MyTitle}{\sffamily
On the Mordell--Weil lattice of $y^2 = x^3 + b x + t^{3^n + 1}$ in characteristic~$3$}

\newcommand{\TheAuthor}{\sffamily
Gauthier \scshape Leterrier}

\begin{document}

\title{
	\Large \vspace*{-2em}
	\MyTitle
	\vspace*{-0.5em}
}
\author{\TheAuthor}
\date{\vspace*{-1ex}}

\maketitle 
\thispagestyle{empty}
\vspace*{-1.5em}

\begin{center}
 \parbox{0.8\linewidth}{
 \parskip=0.5em
\small
{\bfseries \sffamily Abstract --} We study the elliptic curves given by $y^2 = x^3 + b x +t^{3^n+1}$ over global function fields of characteristic $3$~; in particular we perform an explicit computation of the $L$-function by relating it to the zeta function of a certain superelliptic curve $u^3 + b u = v^{3^n + 1}$. 

In this way, using the Néron-Tate height on the Mordell--Weil group, we obtain lattices in dimension $2 \cdot 3^n$ for every $n \geq 1$, which improve on the currently best known sphere packing densities in dimensions 162 (case $n=4$) and 486 (case $n=5$). For $n=3$, the construction has the same  packing density as the best currently known sphere packing in dimension $54$, and for $n=1$ it has the same density as the lattice $E_6$ in dimension~$6$.
\vspace{-1em}
}
\end{center}
\footnoteWithNoNumber{%
Date : 27th March 2022. 

Keywords :
elliptic curves, function fields, $L$-functions, Mordell--Weil group, sphere packings.

MSC 2010 (Math. Subject Classification) : 
11G05, 
11M38, 
11T24, 
11H31. 
%
}

\section{Introduction and main results}\label{sect_Main_results}

Following ideas of N. Elkies \cite{ElkiesMW1,ElkiesMW2,ElkiesMW3} and T. Shioda \cite{Shioda_MW_and_sphere_packings} (from the 1990's), one can use elliptic curves over global function fields to get interesting lattice sphere packings of arbitrarily large rank. This is an opportunity to study their arithmetic, and in particular their $L$-function. Interestingly, for our family of elliptic curves, we can compute the $L$-function very explicitly and deduce the main arithmetic invariants of our curves.

For a given positive integer $n \geq 1$ and for an element $b \in \F_{3^n}^\times$, we consider the elliptic curves given by the (affine) Weierstrass equation :
\nomenclature[1\(E\)]{$E_{n, b}$}{Elliptic curve $y^2 = x^3 + b x + t^{3^n + 1}$ over $\F_{3^n}(t)$}{nomencl:Enb}
\begin{align}\label{eq_def_elliptic_curve_E}
E_{n, b} \; : \;		y^2 = x^3 + b x + t^{3^n + 1}
\end{align}
over $\F_{3^n}(t)$.
One of our main results here is the explicit computation of the $L$-function of $E_{n, b}$ over $\F_{3^{2n}}(t)$ for some choice of the parameter $b$. In particular, we can determine the exact value of the (analytic) rank of those elliptic curves.

\begin{leftbar}
\vspace*{-10pt}
\begin{thmLetter}\label{thm_L_function}
Let $n \geq 1$ be an integer and set $q = 3^n$. Let $b \in \F_{q}^\times$ be any element such that $b^{\tfrac{q-1}{2}} = (-1)^{n+1}$. 
\nomenclature[1\(Q\)]{$q$}{$q=3^n$ where $n \geq 1$}{nomencl:q}%

Then the $L$-function (as defined in \cref{eq_definition_Lfunction}) of the elliptic curve $E_{n, b}$ over $\F_{q^2}(t)$ given by \eqref{eq_def_elliptic_curve_E} is equal to
\begin{equation}
L\left( E_{n, b} / \F_{q^2}(t)	,\;	 T \right)  	=   (1 - q^2 T)^{2 \cdot 3^n}.
\end{equation}
In particular, the analytic rank of $E_{n, b}$ over $\F_{q^2}(t)$ is equal to $2 \cdot 3^n$.
\end{thmLetter}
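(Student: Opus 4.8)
The plan is to compute the entire $L$-function through point counts, reducing everything to the arithmetic of the auxiliary superelliptic curve $\mathcal{C} : u^3 + bu = v^{q+1}$ (recall $3^n+1 = q+1$). Starting from the Euler-product definition, the coefficients of $L\!\left(E_{n,b}/\F_{q^2}(t),T\right)$ are governed by the fibral point counts $\#E_{n,b}(\F_{q^{2r}})$ over the places $t=t_0$. Since the equation is $y^2 = x^3+bx+t^{q+1}$, for fixed $(x,t)$ the number of admissible $y$ equals $1 + \chi(x^3+bx+t^{q+1})$, where $\chi$ is the quadratic character of $\F_{q^{2r}}$ (with $\chi(0)=0$); summing, everything beyond the trivial $q^{4r}$ term is carried by the character sum $T_r := \sum_{x,t\in\F_{q^{2r}}} \chi(x^3+bx+t^{q+1})$. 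After separating the contributions of the bad fibres and of $t=\infty$ (a finite, explicitly computable correction), it therefore suffices to evaluate the $T_r$ for all $r\geq 1$.

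First I would exploit that in characteristic $3$ the polynomial $g(x) = x^3 + bx$ is additive ($\F_3$-linear), with derivative $g'(x)=b\neq 0$, hence separable with kernel of size $1$ or $3$. This additive structure is exactly what lets one trade the quadratic character sum $T_r$ for a point count on $\mathcal{C}$: writing the inner sum over $x$ through the image and kernel of $g$, the outer sum over $t$ through the fibres of $t\mapsto t^{q+1}$, and introducing the quadratic Gauss sum, one expresses $T_r$ in terms of $\#\mathcal{C}(\F_{q^{2r}})$ up to an explicit quadratic Gauss-sum factor of absolute value $q$, whose dependence on $r$ is controlled by the Hasse--Davenport relation. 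This is the step that literally relates the $L$-function to the zeta function of $\mathcal{C}$.

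Next I would compute the zeta function of $\mathcal{C}$. Projecting $(u,v)\mapsto v$ exhibits $\mathcal{C}$ as a degree-$3$ additive (Artin--Schreier-type) cover of $\mathbb{P}^1_v$, étale over the affine line (because $g'=b\neq 0$) and totally ramified at $v=\infty$, where $v^{q+1}$ has a pole of order $q+1$ coprime to $3$. Riemann--Hurwitz with different exponent $(q+2)(3-1)$ then gives $2g_{\mathcal{C}}-2 = 3(-2) + (2q+4)$, i.e.\ $g_{\mathcal{C}} = q = 3^n$; in particular the numerator of $Z(\mathcal{C}/\F_{q^2},T)$ has degree $2\cdot 3^n$, matching the claimed degree of $L$. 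The crux is then to show that $\mathcal{C}$ is maximal (or minimal) over $\F_{q^2}$, i.e.\ that all $2\cdot 3^n$ Frobenius eigenvalues equal $\pm q$: curves defined by an additive polynomial set equal to a power are supersingular in the sense of van der Geer--van der Vlugt, and the hypothesis $b^{(q-1)/2} = (-1)^{n+1}$ is precisely the condition on the Legendre symbol of $b$ that fixes the common sign.

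Finally I would feed $Z(\mathcal{C}/\F_{q^2},T) = (1\mp qT)^{2\cdot 3^n}/\big((1-T)(1-q^2T)\big)$ back through the character-sum identity of the second step. Each eigenvalue $\pm q$ of $\mathcal{C}$ gets multiplied by the Gauss-sum factor of absolute value $q$, and the sign condition on $b$ is exactly what forces every resulting inverse root to be $+q^2$ (rather than $-q^2$); this yields $L\!\left(E_{n,b}/\F_{q^2}(t),T\right) = (1-q^2T)^{2\cdot 3^n}$, whence the analytic rank equals $2\cdot 3^n$. I expect the main obstacle to be the second and third steps together: making the dictionary between $T_r$ and $\#\mathcal{C}(\F_{q^{2r}})$ completely precise (the correct Gauss-sum normalisation and the bad-fibre bookkeeping), and proving the maximality/minimality of $\mathcal{C}$ with the correct sign under the stated hypothesis on $b$ — this last arithmetic input is where the constraint $b^{(q-1)/2} = (-1)^{n+1}$ is genuinely used.
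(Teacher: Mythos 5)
Your first two steps are essentially sound. Reducing the $L$-function to the character sums $T_r$ is exactly how the paper proceeds, and your Gauss-sum/Hasse--Davenport dictionary between $T_r$ and $\#\mathcal{C}(\F_{q^{2r}})$ is a workable (if computationally heavier) alternative to the paper's mechanism, which instead exploits that the inner sum $\sum_x \chi(x^3+bx+t)$ depends only on $t$ modulo $\im(x \mapsto x^3+bx)$, hence takes just two values, the base value being computed from the supersingular quotient elliptic curve $y^2 = x^3+bx$ of $\mathcal{C}$; the paper's own remark acknowledges that a Jacobi/Gauss-sum computation could replace this. Your Riemann--Hurwitz computation of the genus $g_{\mathcal{C}} = 3^n$ is also correct.

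The genuine gap is at what you yourself call the crux: you never prove that all $2 \cdot 3^n$ Frobenius eigenvalues of $\mathcal{C}$ over $\F_{q^2}$ equal $-q$. Supersingularity in the sense of van der Geer--van der Vlugt is a \emph{geometric} statement: it says each eigenvalue is $q$ times a root of unity (equivalently, the Jacobian becomes isogenous to a power of a supersingular elliptic curve over $\overline{\F_3}$), so it yields maximality or minimality only after an unspecified finite base change; over $\F_{q^2}$ itself the eigenvalues could a priori be $q\zeta$ for various non-real roots of unity $\zeta$, and no ``sign condition on $b$'' can rule this out without an actual arithmetic input. What is needed --- and what constitutes the heart of the paper's proof --- is the explicit point count $\#\mathcal{C}(\F_{3^{2n}}) = 1 + 3^{2n+1}$, which attains the Weil upper bound $q^2 + 1 + 2 g_{\mathcal{C}} q$ and therefore forces every eigenvalue to equal $-q$. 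That count is obtained by fibering $\mathcal{C}$ over the norm map $v \mapsto v^{q+1}$ and proving that $\#\{x \in \F_{q^2} \tq x^3+bx \in \F_q\} = 3^{n+1}$ (\cref{lemma_linear_algebra}), via a kernel/image dimension count for the commuting additive maps $x \mapsto x^3+bx$ and $x \mapsto x^q - x$. That lemma is precisely where the hypothesis $b^{(q-1)/2} = (-1)^{n+1}$ does its real work: it guarantees that $\ker(x \mapsto x^3+bx)$ lies in $\F_{q^2}$ and inside the image of $x \mapsto x^q - x$. Without this point count, or an equivalent proof of maximality over $\F_{q^2}$, your outline does not establish the theorem.
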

\vspace*{-15pt}
\end{leftbar}

Let us explain how to construct a lattice from those curves.
In general, if $E$ is an elliptic curve over a global function field $K = k(X)$, where $X$ is a smooth projective algebraic curve over a finite field $k$ (we will mostly focus on the case $X = \Pl^1$), then $E(K)$ is a finitely generated abelian group, by Mordell--Weil theorem (generalized by Lang and Néron ; see \cite[theorem III.6.1]{SilvermanAAEC}). We denote the identity element by $O_E$.

Given a Weierstrass equation for $E$ over $k(X)$, we have a degree-2 cover $x : E  \to \Pl^1$ given by the $x$-coordinate. For every $P \in E(k(X)) \minus \{O_E\}$, we can see $x(P) \in k(X)$ as a rational map $X \dashrightarrow \Pl^1$. We can therefore define the \textsl{naive height} as
\[ 		\fonction{h}{ E(K) }{ \Z_{\geq 0} }
						{ P }{  \begin{cases}
									\mathsf \deg(x(P))		&\text{if } P \neq O_E \\
									0 							&\text{else}
								\end{cases}	} \]
(if $x(P) \in k$ is constant, its degree is set to be $0$).
We define the (canonical) Néron--Tate height as
\nomenclature[1\(H\)]{$\hat{h}$}{Néron--Tate height $\hat{h} : E(K) \to \R_{\geq 0}$}{nomencl:height}
\begin{equation}\label{eq_def_Neron_Tate_height}
\hat{h}(P)		:=		\lim_{n \to + \infty} 4^{-n} h(2^n P) \in \R
\end{equation}
for every $P \in E(K)$. It is a quadratic form, which is positive-definite on $E(K) / E(K)_{\tors}$ (\cite[theorem III.4.3]{SilvermanAAEC}), where $E(K)_{\tors}$ denotes the torsion subgroup of $E(K)$. 
Therefore, we obtain a lattice, called the \textsl{Mordell--Weil lattice} of $E$ over $K$. We introduce a convenient sublattice, namely :

\begin{df}\label{def_narrow_MWL}
\nomenclature[1\(E\)]{$E(K)^0$}{Narrow Mordell--Weil lattice}{nomencl:narrowMWL}
The \textsl{narrow Mordell--Weil lattice} of $E$ over $K$ consists of all the points $P \in E(K)$ such that for every place $v$ of $K$, the reduction $\overline{P}$  is a non-singular point on the reduction $\overline{E_v}$ of a minimal integral Weierstrass model $E_v$ of $E$ at $v$.
It is denoted by $E(K)^0  \subset E(K)$.
\end{df}

Now, given a lattice $L \into \R^d$, let 
\nomenclature[1\(L\)]{$\lambda_1(L)$}{Length of one of the shortest non-zero vectors of a lattice $L$}{nomencl:min-norm-L}
\begin{equation}\label{eq_def_lambda1}
\lambda_1(L) 		:=		 \min\left\{ \| v \|  \tq v \in L \minus \{0\} \right\}
\end{equation}
be the length of one of its shortest non-zero vectors.
Then the translates $B+L$ of the euclidean ball $B = B\left( 0, \frac{\lambda_1(L)}{2} \right) \subset \R^d$ by points of $L$ defines a \textsl{lattice packing} of balls. Its \textsl{density} is defined as the proportion of the space covered by $B+L$, i.e.,
\nomenclature[1\(D\)]{$D(L)$}{Packing density of a lattice $L$ : $D(L) \in [0,1]$}{nomencl:packingDensity}
\begin{equation}\label{eq_def_packing_density}
D(L) := \limsup\limits_{r \to +\infty} \dfrac{ \vol( (B+L) \cap B(0, r)) }{ \vol(B(0,r)) } \in [0, 1].
\end{equation}
In the case of such a lattice packing, we can simplify the expression into\linebreak
$D(L) = \dfrac{ (\lambda_1(L) / 2)^d \cdot \vol(B(0,1)) }{ \vol(\R^d / L) }$. This motivates us to consider the following normalization :%
\vspace{-1em}%
\nomenclature[1\(D\)]{$\delta(L)$}{Center density of a lattice packing $L$}{nomencl:CenterDensity}%
\begin{df}\label{def_center_density}
The \textsl{center density} of a packing of balls given by a lattice $L \into \R^d$~as
\[
\delta(L) := \dfrac{ (\lambda_1(L) / 2)^d }{ \vol(\R^d / L) }.
\qedhere	\]
\end{df}

The maximal sphere packing density $\displaystyle P_d := \sup_{\substack{L \subset \R^d \text{ lattice}}} D(L)$, in a given dimension $d$, is known exactly only if $d \leq 8$ or if $d=24$. Minkowski used a non-constructive argument to show that $P_d \geq 2 \cdot 2^{-d}$ (we refer to \cite{Conway_Sloane_book} for more details). Very little seems to be known about explicit lattice constructions that reach this lower bound, let alone exceed it, if the dimension is large enough. 

It turns out that as a corollary of \cref{thm_L_function} and of results of Shioda, we get a lower bound on the sphere packing density of the narrow Mordell--Weil lattice of the elliptic curves $E_{n, b}$. 
\begin{leftbar}
\vspace*{-10pt}
\begin{corLetter}\label{cor_packing_density_MWL}
Let $n \geq 1$ be an integer, fix $b \in \F_{3^n}^\times$ as in \cref{thm_L_function}, and set $q = 3^n$.

\nomenclature[1\(L\)]{$L_n$}{Narrow Mordell--Weil lattice $L_n := E_{n, b}\left( \F_{q^2}(t) \right)^0$}{nomencl:NarrowMWLofEnb}
Let $L_n := E_{n, b}\left( \F_{q^2}(t) \right)^0$ be the narrow Mordell--Weil lattice of the elliptic curve $E_{n, b}$ over $\F_{q^2}(t)$, as defined in \cref{def_narrow_MWL}.

Then the rank of $L_n$ is $2 \cdot 3^n$ and its center density satisfies the lower bound
\begin{align}\label{eq_cor_packing_density_MWL}
\delta(		L_n		) 		\geq
\left( \dfrac{3^{n-1} + 1}{4} \right)^{3^n} 	
\cdot 		
3^{- n \left( \tfrac{3^{n-1} - 1}{2} \right)  		-		\tfrac{1}{2}}
\end{align}
\end{corLetter}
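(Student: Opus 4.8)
The plan is to combine \cref{thm_L_function} with Shioda's theory of Mordell--Weil lattices of elliptic surfaces, applied to the elliptic surface $\mathcal{S}_n \to \Pl^1$ attached to $E_{n,b}$ over $\F_{q^2}$. First I would pin down the geometry of $\mathcal{S}_n$. A direct computation of the Weierstrass invariants in characteristic $3$ gives $c_4 = 0$ and discriminant $\Delta = -b^3$, a nonzero constant on $\A^1$; hence $j \equiv 0$, the surface has good reduction at every finite place, and the unique bad fibre sits at $t = \infty$. Resolving this fibre (the delicate step, discussed below) shows it is additive of Kodaira type $IV$, so its non-identity components span a root lattice $A_2$ of discriminant $3$, while the arithmetic genus is $\chi(\mathcal{S}_n) = (3^{n-1}+1)/2$. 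Since \cref{thm_L_function} gives analytic rank $2\cdot 3^n$, and the Tate conjecture holds here (because $\mathcal{S}_n$ is dominated by the superelliptic curve $u^3 + bu = v^{3^n+1}$ used in the proof of \cref{thm_L_function}), the algebraic rank equals $\rk L_n = 2\cdot 3^n$; combined with the Shioda--Tate identity $\rk = \rho - 2 - (m_\infty - 1)$ this forces $\rho = b_2(\mathcal{S}_n)$, so $\mathcal{S}_n$ is supersingular and $|\det \mathrm{NS}(\mathcal{S}_n)|$ is a power of $3$.

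Next I would compute the covolume. Shioda's determinant formula expresses $\det(L_n) = \det\!\big(E_{n,b}(\F_{q^2}(t))^0\big)$ through $|\det \mathrm{NS}(\mathcal{S}_n)|$, the $A_2$-contribution of the fibre at infinity, and $|E_{n,b}(\F_{q^2}(t))_{\tors}|^2$. The supersingular (power-of-$3$) discriminant together with the torsion then yield $\det(L_n) = 3^{\,1 + n(3^{n-1}-1)}$, the leading factor $3^1$ being precisely the $A_2$-discriminant; equivalently this value can be read off the Artin--Tate/BSD formula, whose left-hand side is known explicitly from \cref{thm_L_function}, with the factor $|\Sha| \geq 1$ accounting for the inequality direction. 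Hence, writing $d = 2\cdot 3^n$, one gets $\vol(\R^d/L_n) = \sqrt{\det(L_n)} \leq 3^{1/2}\cdot 3^{\,n(3^{n-1}-1)/2}$.

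Then I would determine the minimal norm via Shioda's height formula $\hat{h}(P) = 2\chi(\mathcal{S}_n) + 2\,(P\cdot O) - \sum_v \mathrm{contr}_v(P)$. For $P$ in the \emph{narrow} lattice $L_n$ every local term $\mathrm{contr}_v(P)$ vanishes by definition, and $(P\cdot O) \geq 0$, so each nonzero $P \in L_n$ satisfies $\hat{h}(P) \geq 2\chi(\mathcal{S}_n) = 3^{n-1}+1$. I would then exhibit explicit polynomial sections coming from the superelliptic model that attain this value, giving $\lambda_1(L_n)^2 = 3^{n-1}+1$ and $(\lambda_1(L_n)/2)^d = \big((3^{n-1}+1)/4\big)^{3^n}$. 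Combining the numerator bound with the covolume bound yields \eqref{eq_cor_packing_density_MWL}.

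The main obstacle is the resolution of the fibre at $t = \infty$ in characteristic $3$: because $j = 0$ the reduction is additive and wildly ramified, so the naive Euler-number count is corrected by a Swan conductor, and one must compute this wild part correctly to obtain both the Kodaira type $IV$ and the value $\chi(\mathcal{S}_n) = (3^{n-1}+1)/2$. This single invariant drives everything — the minimal norm through the height formula and the covolume through Shioda's determinant formula — so all the characteristic-$3$ subtlety is concentrated in this one local computation.
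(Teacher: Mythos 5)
Your proposal is, in substance, the same proof as the paper's, transported to the elliptic surface: your appeal to the Tate conjecture is the paper's appeal to \cref{thm_BSD_true_for_isotrivial} together with \cref{prop_E_isotrivial} (note that a surface cannot be dominated by the superelliptic curve alone --- what is true is that $E_{n,b}$ becomes constant over the function field of $C_{n,b}$, so the surface is dominated by a \emph{product} of curves, which is exactly the isotriviality the paper uses); your Artin--Tate computation of the covolume, with $|\Sha| \geq 1$ giving the inequality direction, is the paper's use of the BSD formula in \cref{prop_lower_bound_density_general_formula}; your height-formula bound $\hat h(P) \geq 2\chi(\mathcal{S}_n) = 3^{n-1}+1$ for nonzero narrow points is precisely \cref{thm_Lower_bound_on_minimal_norm}, since $2\chi(\mathcal{S}_n) = \tfrac16 \discr{E_{n,b}/K}$ with $K = \F_{q^2}(t)$; and the fibre analysis at $t=\infty$ is \cref{prop_reduction_types}, obtained by Tate's algorithm.

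Two overclaims should be deleted, though neither damages the final inequality. First, you propose to ``exhibit explicit polynomial sections that attain'' the minimal norm, so as to get $\lambda_1(L_n)^2 = 3^{n-1}+1$ exactly. This is not known for general $n$: the paper produces such points only for $n \in \{1,2,3\}$ and states explicitly that it does not know whether the bound is sharp in general. It is also unnecessary, since the center density is increasing in $\lambda_1$, so the inequality $\lambda_1(L_n)^2 \geq 3^{n-1}+1$ coming from the height formula is all that \eqref{eq_cor_packing_density_MWL} requires. Second, the exact value $\det(L_n) = 3^{1+n(3^{n-1}-1)}$ does not follow from supersingularity: knowing that $|\det \mathrm{NS}(\mathcal{S}_n)|$ is a power of $3$ does not determine \emph{which} power, and the exact power involves $|\mathrm{Br}(\mathcal{S}_n)|$ (equivalently $|\Sha(E_{n,b}/K)|$), which is not known to be trivial --- the paper even suggests it may be nontrivial for large $n$; moreover, passing from $E(K)$ to the narrow sublattice requires controlling the index $[E(K):E(K)^0]$, which the paper bounds by $c(E_{n,b}/K)=3$ via the last part of \cref{thm_Lower_bound_on_minimal_norm}. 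Only the \emph{upper} bound on the covolume is provable, and, as you yourself note, only that is needed. Finally, the Swan-conductor worry is dissolved by Tate's algorithm, which is valid in residue characteristic $3$ and directly yields type IV, $\ord_\infty(\Delta_{\min}) = 2(3^n+3)$ and $c_\infty = 3$; no separate computation of the wild part is needed, because both the minimal-norm bound and the BSD height $H(E/K) = |k|^{\discr{E/K}/12}$ are expressed through $\discr{E/K}$ alone.
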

\vspace*{-25pt}
\end{leftbar}

In particular, for $n \in \{1, ..., 7\}$, we get the following values, gathered in the \hbox{table} below.
\vspace{0.5em}
\begin{center}
\newcolumntype{C}[1]{%
 >{\vbox to 3.5ex\bgroup\vfill\centering\arraybackslash}%
 p{#1}%
 <{\vskip-\baselineskip\vfill\egroup}}  
\begin{tabular}{>{} C{0.5cm} <{} C{1.25cm} <{} C{4.5cm} <{} C{4cm} <{}}  
\hline
 \rowcolor{black!5} $n$ & rank of $L_n$ & $\log_2\left( \delta(L_n) \right) \geq$ & \makecell{Best \emph{lattice}  packing \\ density known so far} \\
   \hline
   	$1$ & $6$ & $\log_2\left(\sqrt{3} / 24 \right) \simeq -3.79248$ 	&
   	\makecell{$\delta(E_6) = \frac{ \sqrt{3} }{ 24 }$ \\ (\footnotesize \cite{Conway_Sloane_book}, p. xix)} \\
  \hline	
	$2$ & $18$ & {\color{gray} $\log_2\left(\tfrac{\sqrt{3}}{27} \right) \simeq -3.962406$} &  \shortstack{$-3.79248$ \\ \cite{Conway_Sloane_book}, p. xix }\\
  \hline	  
  $3$ & $54$ & $\log_2\left(\tfrac{\sqrt{3} \cdot 5^{27}}{2^{27} \cdot 3^{13}} \right) \simeq 15.88002$		& 
  \shortstack{15.88 \\ \footnotesize{ (Elkies \cite{Conway_Sloane_book}, p. xviii) }} \\ 
  \hline
  $4$ & $162$ & 144.1852	& \shortstack{130.679 \\ \cite{Refinement_Craig}} \\
  \hline
  	$5$ & $486$ & 741.1001	&  \shortstack{703.05 \\ \cite{Ball_lower_bound_on_packing_density} } \\
  \hline
  	$6$ & $1458$ & {\color{gray} 3172.032}	& \shortstack{3236.6 \\ \cite{Ball_lower_bound_on_packing_density} } \\
   \hline
\end{tabular}
\end{center}
\vspace{2ex}

We see that in dimensions $6$ and $54$, we get the same density as the previous densest known lattice packings of balls (in fact no construction is provided for the 54-dimensional lattice $MW_{54}$ listed in \cite{Conway_Sloane_book}, p. xx). 
Moreover, in dimensions 162 and 486, we improve the current records. But in dimension 18, another construction achieves a higher packing density, and in dimensions above 1458, non-constructive lower bounds are the best known so far.

\subsection{Outline of the proofs}

\Cref{thm_L_function} is proved in \cref{sect_pf_thm_A} by performing an explicit computation of the $L$-function. This requires counting the number of points on the reduction of $E_{n, b}$ modulo all the places of $\F_{q^2}(t)$, which involves sums of Legendre symbols, introduced in \cref{subsec_definition_Legendre_sums}.
 
Those sums can be determined thanks to an auxiliary superelliptic curve over $\F_{q}$ (see \cref{subsec_superelliptic_curve}), and using the fact that $x \mapsto x^3 + bx$ is an additive map in characteristic $3$ (see \cref{lemma_linear_algebra,lemma_sums_sigmab}). 
Finally, the number of points over $\F_{q^2}$ of this auxiliary superelliptic curve can be computed essentially because its jacobian is isogenous to a power of a \textit{supersingular} elliptic curve. 

The idea behind this approach was inspired by the work of N. Elkies \cite{ElkiesMW1}, where a counting argument about hyperelliptic curves has been used. In our case, this will get replaced by a \emph{superelliptic} curve (see \cref{subsec_superelliptic_curve}).

In both works, the elliptic curves (over function fields of characteristic 2 and 3 respectively) are isotrivial -- we also say equivalently "potentially constant". But in our case $E_{n, b}$ is a \textit{cubic} twist of a constant curve (i.e., defined over $\F_{q}$ ; see \cref{prop_E_isotrivial}), while the elliptic curves studied by Elkies were \textit{quadratic} twists of a constant curve, which is what allowed to compute of the rank and the $L$-function.

Finally, \cref{cor_packing_density_MWL}, proved in \cref{sect_pf_cor_A}, follows from the use of Birch--Swinnerton-Dyer formula (known in this case, because $E_{n, b}$ is \textit{isotrivial}, see \cref{thm_BSD_true_for_isotrivial} and \cref{prop_E_isotrivial}), as well as a result of Shioda on the lower bound of the height of points in the narrow Mordell-Weil lattice (\cref{thm_Lower_bound_on_minimal_norm}).
In \cref{subsec_sharpness}, we discuss the sharpness of inequality \eqref{eq_cor_packing_density_MWL}.

For the convenience of the reader, some frequently used notations are gathered in a \cref{SectNomenclature} at the end of this document. 

\begin{rmq}\label{rmk_Shioda_algo}
\newcommand*{\rmE}{\mathrm{E}}
Before continuing, we mention that Shioda's results in \cite{Shioda_algorithm_Picard_nb} (especially remark 10 therein) tell us that if $m$ is an even integer and $3^e \equiv -1 \pmod{2m}$ for some integer $e \geq 1$, taken to be minimal, then the rank of $\rmE_m(  \overline{\F_3}(t) )$ is $f(\rmE_m) - 4$, where $f(\rmE_m)$ denotes the conductor of the elliptic curve $\rmE_m : y^2 = x^3 + x + t^m$ over $\F_3(t)$.

We are going to investigate the situation where $m = 3^n + 1$ for some integer $n \geq 1$, which is \emph{not} directly covered by the above result. 
Even if it did apply (e.g., via theorem 1, \emph{ibid.}), we would need to know over what finite field of constants the rank is achieved, so anyway we need to use another technique to determine the rank. 

It is also possible to express the L-function of $\rm{E}_m$ over $\F_p(t)$ for any odd prime $p$ explicitly in terms of Jacobi sums, which allows to get another proof of \cref{thm_L_function}. See also \cref{rmk_Shioda_algo_2} below.
\end{rmq}

\pagebreak

{\hypersetup{linkcolor=black!75}
\section[Proof of theorem A]{Proof of \texorpdfstring{\cref{thm_L_function}}{theorem A} }
\label{sect_pf_thm_A}
}

\subsection{Definition of the \emph{L}-function}

In this paragraph, we consider a finite field $k = \F_{|k|}$, and we set $K = k(t)$.
Recall that the set of places $v$ of $K$ (i.e., an equivalence class of absolute values on $K$, which are necessarily trivial on $k$ and are non-archimedean) 
is in bijection with the set of closed points of $\Pl^1_k$, which is itself in bijection with the set of Galois orbits of $\bar{k}$-rational points in $\Pl^1(\overline{k})$.

We denote by $\infty$ the place of $K$ associated to the point $[1:0] \in \Pl^1_k$ (it is given by $-\deg$).

Let $E$ be an elliptic curve over $K$.
For any place $v$ of $K$, we let $E_v$ be a minimal \textit{integral} Weierstrass model at $v$. We let $\overline{E_v}$ be its reduction modulo an uniformizer $\pi_v$ of the ring of integers $\O_v \subset K_v$ of the completion $K_v$ of $K$ at $v$, that is: $\overline{E_v} := E_v \times_{\O_v} \O_v / (\pi_v)$. This is a projective plane cubic curve (possibly singular) over the finite field $\F_v := \O_v / (\pi_v)$, and its isomorphism class does not depend on the choice of a minimal integral Weierstrass model $E_v$ (this follows from proposition VII.1.3 (b) in \cite{SilvermanAEC}).

We now define the integers
\nomenclature[1\(A\)]{$A_E(v, j)$}{Given an elliptic curve $E$, a place $v$ and a multiple $j $ of $\deg(v)$, we set $A_E(v, j)  :=\linebreak "|k"|^j		+ 1	-	"| \overline{E_v}(\F_{"|k"|^j}) "|$}{nomencl:AEvj}
\begin{equation}\label{eq_definition_sums_A}
\begin{split}
A_E(v, j)  &:= 		 |k|^j		+ 1	-	\left| \overline{E_v}(\F_{|k|^j}) \right| 				,	\\
a_v(E) 		&:=		 A_E(v, \deg(v)) = | \F_v | + 1	- 	\left| \overline{E_v}(\F_v) \right|	 ,
\end{split}
\end{equation}
where $j \geq 1$ is any integer multiple of $\deg(v) := [\F_v : k]$ (so in particular $\F_{|k|^j}$ is an extension of $\F_v$).
Notice that $a_v(E)$ is equal $0$ if $E$ has additive reduction at $v$, and $\pm 1$ if $E$ has multiplicative reduction at $v$ (this follows from proposition III.2.5 in \cite{SilvermanAEC}, see also section 2.10 in \cite{Washington_EllCurves}).

We define the \textit{local factor} at $v$ as
$$L_v(E/K, T) := \begin{cases}
1 - a_v(E)	 T^{\deg(v)} + |k|^{\deg(v)} T^{2 \deg(v)} 		&\text{if $E$ has good reduction at $v$} 
 \\
1 - a_v(E)  T^{\deg(v)}			&\text{else.}
\end{cases}$$
The \textit{L}-function is defined as 
\begin{equation}\label{eq_definition_Lfunction}
L(E/K, T) := \prod_{ \substack{v \text{ place} \\ \text{of } K}} 
					L_v(E/K, T)^{-1} \in \Z[[T]].
\end{equation}

One can re-write the $L$-function as follows (this is Lemme 1.3.15 in \cite{Griffon_PhD}), by an elementary computation, where $[w]$ is the place corresponding to $w$~:
\begin{align}\label{eq_log_Lfunction_as_sum}
\log L(E / K, T)
&=
\sum_{j \geq 1} 		
\Bigg( 		\sum_{		w \in \Pl^1( \F_{|k|^j} )		}  A_E( [w] , j)		 \Bigg)
\dfrac{ T^j }{ j }.
\end{align}

\subsection{Definition of the relevant Legendre sums}\label{subsec_definition_Legendre_sums}

We first analyze the reduction types of the elliptic curve $E_{n, b}$ over $\F_{q^2}(t)$, which we state as a proposition for later use. To this end, we recall some standard notations.

\begin{df}\label{def_Tamagawa_nb_discr_conductor}
Let $k$ be a finite field, and let $X$ be a smooth projective geometrically irreducible algebraic curve over $k$. Denote by $g_X$ its genus. Set $K = k(X)$ and let $E$ be an elliptic curve over $K$.

\nomenclature[1\(D\)]{$\discr{E/K}$}{Degree of the minimal discriminant of an elliptic curve $E/K$}{nomencl:discr}
\nomenclature[1\(F\)]{$f(E/K)$}{Degree of the conductor of an elliptic curve $E/K$}{nomencl:cond}
\nomenclature[1\(C\)]{$c(E/K)$}{Product of the Tamagawa numbers $c_v(E/K)$ of an elliptic curve $E/K$}{nomencl:tamagawa}
\begin{enumerate}
\item We denote by
$\Delta_{\min}(E/K)$ the minimal discriminant of $E/K$ (as in \cite[exercise 3.35]{SilvermanAAEC}). It is a divisor on the curve $X$.

\item 
We denote by $f(E/K)$ the degree of the conductor divisor of $E/K$ (see \cite[exercise 3.36]{SilvermanAAEC}).

\item For each place $v$ of $K$, we denote by $c_v(E/K)$ the local Tamagawa factor of $E/K$ at $v$, i.e., the number of irreducible components of the special fiber of the Néron model of $E$ at $v$ that have multiplicity 1 and are defined over the residue field $\F_v$ at $v$. 
We also set $c(E/K) := \prod\limits_{v \in |X|} c_{v}(E/K)$.
\qedhere
\end{enumerate}
\end{df}

\begin{prop}\label{prop_reduction_types}
Let $E_{n, b}$ be the elliptic curve $y^2 = x^3 + bx + t^{3^n+1}$ over $K_n := \F_{3^{2n}}(t)$ (where $b \in \F_{3^n}^\times$ and $n \geq 1$ are fixed).

Then $E_{n, b}$ has good reduction at all places $v \neq \infty$ and has bad additive reduction of type IV at $v = \infty$, with the following invariants :
\begin{align*}
\discr{E_{n,b} / K_n} 	&= 12 \lceil (3^n+1)/6 \rceil = 2 \cdot (3^n + 3),
\\
f(E_{n,b} / K_n) 			&= \discr{E_{n,b} / K_n}  - 2,
\\
c(E_{n,b} / K_n) &= c_{\infty}(E_{n,b} / K_n) = 3.
\end{align*}
\vspace*{-1cm}
\end{prop}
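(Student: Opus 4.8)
The plan is to keep the given Weierstrass model and treat the finite places and the place $\infty$ separately, the whole point being that in characteristic $3$ the model is already good away from $\infty$ and all the interesting behaviour is concentrated at $\infty$. First I would record the standard invariants of $E_{n,b}\colon y^2 = x^3 + bx + t^{3^n+1}$ in characteristic~$3$. Since $a_1=a_2=a_3=0$ one has $b_2=0$, $b_4=2b$, $b_6=4t^{3^n+1}$, $b_8=-b^2$, hence $c_4 = b_2^2-24b_4 = -48b = 0$ and $\Delta = -16\bigl(4b^3+27t^{2(3^n+1)}\bigr) = -64b^3$ in characteristic $3$. Thus $\Delta = -64b^3 \in \F_{q^2}^\times$ is a nonzero constant and $j=c_4^3/\Delta=0$ (so $E_{n,b}$ is supersingular, which already excludes multiplicative reduction). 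At every finite place $v\neq\infty$ the coefficients $b$ and $t^{3^n+1}$ lie in $\F_{q^2}[t]\subset\O_v$ and $v(\Delta)=0$, so the given model is integral and minimal with $v(\Delta)=0$ there; hence $E_{n,b}$ has good reduction at all $v\neq\infty$, $\Delta_{\min}(E_{n,b}/K_n)$ is supported only at $\infty$, and $c_v=1$ for $v\neq\infty$.

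At $\infty$ I set $s=1/t$ (a uniformizer, $v_\infty(s)=1$) and rescale by $x=s^{-2k}X$, $y=s^{-3k}Y$, turning the equation into $Y^2 = X^3 + b\,s^{4k}X + s^{6k-(3^n+1)}$. This is integral exactly when $6k\geq 3^n+1$, i.e. $k\geq k_0:=\lceil(3^n+1)/6\rceil$; since $3^n\equiv 3\pmod 6$ for $n\geq 1$ one gets $k_0=(3^n+3)/6$. Taking $k=k_0$ yields $a_4=b\,s^{4k_0}$ with $v_\infty(a_4)=4k_0\geq 4$ and $a_6=s^{2}$ with $v_\infty(a_6)=2$. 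This model is minimal: reversing the scaling by $(X,Y)\mapsto(s^{2}X,s^{3}Y)$ would require $v_\infty(a_6)\geq 6$, but here $v_\infty(a_6)=2<6$. Its discriminant is $-64b^3\,s^{12k_0}$, so $v_\infty(\Delta_{\min})=12k_0$, whence
\[
\discr{E_{n,b}/K_n} = 12 k_0 = 12\,\lceil (3^n+1)/6\rceil = 2\,(3^n+3).
\]

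Finally I would run Tate's algorithm on this minimal model at $\infty$ (its combinatorial steps remain valid in residue characteristic $3$). The reduction $Y^2=X^3$ is a cusp, so the reduction is additive; since $v_\infty(a_6)=2\geq 2$ we are past type~$II$, since $v_\infty(b_8)=2\,v_\infty(a_4)=8k_0\geq 3$ we are past type~$III$, and since $v_\infty(b_6)=v_\infty(4a_6)=2<3$ the reduction is of type~$IV$. At the type~$IV$ stage the Tamagawa factor equals $3$ exactly when the associated quadratic splits over the residue field $\F_v=\F_{q^2}$; with $a_3=0$ and $a_6/s^2=1$ this quadratic is $T^2-1=(T-1)(T+1)$, whose two roots $\pm 1$ are distinct since $\mathrm{char}=3$, so $c_\infty=3$ and hence $c(E_{n,b}/K_n)=c_\infty=3$. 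For the conductor I would invoke the Ogg--Saito formula $f_\infty=v_\infty(\Delta_{\min})-m_\infty+1$, valid even in the wildly ramified case; for type~$IV$ the special fibre has $m_\infty=3$ geometric components, giving $f(E_{n,b}/K_n)=\discr{E_{n,b}/K_n}-2$. The main obstacle is precisely this last point: because we are in characteristic $3$ the reduction at $\infty$ is wildly ramified, so $v_\infty(\Delta_{\min})$ is far larger than the tame value $4$ attached to type~$IV$, and one cannot read $f$ off from the naive tame recipe $f=2$ — the Swan contribution must be accounted for, which is exactly what Ogg's formula packages; one must likewise justify minimality of the model directly, since the familiar criterion ``$v(\Delta)<12$ or $v(c_4)<4$'' is unavailable in characteristic $3$.
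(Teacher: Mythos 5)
Your proposal is correct and follows essentially the same route as the paper: show the discriminant $-b^3$ is a nonzero constant (hence good reduction at all finite places), rescale at $\infty$ to the integral model with $v_\infty(a_4)=4\mu$ and $v_\infty(a_6)=2$, run Tate's algorithm to Step 5 to get type IV (your explicit check that $T^2-1$ splits, giving $c_\infty=3$, is a detail the paper leaves implicit), and read off $v_\infty(\Delta_{\min})=12\lceil(3^n+1)/6\rceil$ and $f_\infty=v_\infty(\Delta_{\min})-2$ via Ogg's formula. One caveat: your stand-alone minimality argument is incomplete, since it only rules out the pure scaling $(X,Y)\mapsto(s^2X,s^3Y)$ and not changes of variables involving a translation $X\mapsto X+r$ (which in characteristic $3$ replaces $a_6$ by $r^3+a_4r+a_6$, still of valuation $2$, so the conclusion survives); in any case minimality is automatic because Tate's algorithm terminates before the non-minimality step.
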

\begin{proof}
The curve $E_{n, b}$ has Weierstrass discriminant $-b^3 \in \F_{3^n}^\times$ according to proposition A.1.1.(b) in \cite{SilvermanAEC}. In particular,  $E_{n, b}$ had good reduction at all places $v \neq \infty$ and $y^2 = x^3 + bx + t^{3^n+1}$ is a minimal integral Weierstrass model at all $v \neq \infty$.

We follow Tate's algorithm as written down in \cite[IV.9, p. 366]{SilvermanAAEC}.
Let $m := 3^n + 1$ and $\mu := \lceil \frac{m}{6} \rceil \geq 1$. It is easy to see that $m \equiv 4 \pmod 6$ (e.g., by induction on $n$), so $6\mu - m = 2$.

The affine equation $E_{\infty} : y^2 = x^3 + b x t^{-4 \mu} + t^{m - 6 \mu}$ is a minimal integral Weierstrass model at $v = \infty$, where we take $\pi_v := t^{-1}$ as uniformizer.

We have the following coefficients, as defined in \cite[IV.9, p. 364]{SilvermanAAEC} :
\[
b_2 = 0,	\qquad
b_4 = 2b t^{-4\mu},	\qquad
b_6 = 4t^{m - 6 \mu}, 	\qquad
b_8 = - \frac{1}{4} \cdot (2bt^{-4\mu} ) ^2.
\]
The singular point on the reduction of $E_{\infty}$ modulo $\pi$ is $(\overline{0}, \overline{0})$, which means that the condition in \emph{Step 2} of Tate's algorithm (as in \cite[IV.9, p. 366]{SilvermanAAEC}) is satisfied.

Since $6\mu - m = 2$, the constant coefficient $a_6 := t^{m - 6 \mu}$ is equal to (hence divisible by) $\pi^2$. Moreover, $b_8$ is divisible by $\pi^3$, but $b_6$ is not divisible by $\pi^3$.
Therefore, Tate's algorithm stops at \emph{Step 5}, which states that $E$ has bad additive reduction of Kodaira--Néron type IV.
From there, we know that the Tamagawa number at $v=\infty$ is $c_{\infty}(E) = 3$, 
and that the local conductor is $f_v = v(\Delta) - 2$ and $v_{\pi}(\Delta) = 12 \mu = 12 \lceil m/6 \rceil$, since the Weierstrass discriminant is $\Delta = -8b_4^3 - 27b_6^2 			=		 -8 \cdot 8b^3 \pi^{12 \mu}$. 
\end{proof}

\nomenclature[1\(L\)]{$\lambda_{k}$}{Legendre symbol $k^\times \to \{ \pm 1\}$ of a finite field $k$}{nomencl:Legendre}
When $k$ is a finite field of odd cardinality, let 
\begin{align*}
\lambda_k : k^{\times} \to \{ \pm 1\} \into \C^{\times},
\qquad 
x \mapsto x^{\frac{|k| - 1}{2}}
\end{align*}
be the Legendre symbol which is the unique character of order $2$ on $k^{\times}$.

\begin{rmq}\label{rmk_Legendre}
It is important to be careful about the subscript $k$ in $\lambda_{k}$, because when $q'$ is a power of some odd prime power $q$, the restriction of $\lambda_{\F_{q'}}$ to the subfield $\F_q$ is \emph{not} equal to $\lambda_{\F_{q}}$ (e.g., $\lambda_{\F_{q^2}}(x) = 1$ for every $x \in \F_q$).
\end{rmq}

According to \cref{eq_log_Lfunction_as_sum}, computing the $L$-function of $E_{n, b}$ amounts to determining the sums
\nomenclature[1\(S\)]{$S_b(n, j)$}
{Sum of $A_{E_{n, b}}(w, j)$ over $w \in \Pl^1( 		\F_{  (q^2)^j }		)$}{nomencl:Sbnj}
\begin{equation}\label{eq_definition_sums_Sb}
S_b(n, j) := 		\sum_{		w \in \Pl^1( \F_{  (q^2)^j } )		}  A_{E_{n, b}}(w, j),
\end{equation}
where $q = 3^n$, as we have $\displaystyle \log L(E_{n, b} / \F_{q^2}(t), T) = \sum\limits_{j \geq 1} 	S_b(n, j) \frac{ T^j }{ j }$.
%
%
From the definition \eqref{eq_definition_sums_A}  of  $A_E(w, j)$ and of $E_{n, b}$, we see that the above sum is equal to
\begin{equation}\label{eq_sum_Sb_as_Legendre}
S_b(n, j) = 		- \sum_{			w, 	x	 \in \F_{  (q^2)^j }			}  		
\lambda_{ 	\F_{  (q^2)^j }	 } 
\left( x^3 + b x + w^{3^n + 1} \right).
\end{equation}
Notice that we can discard the terms with $w = [1:0]$ since we have $A_{E_{n, b}}(\infty, j) = 0$ for every $j \geq 1$ by \cref{prop_reduction_types}.

The strategy to evaluate those sums $S_b(n, j)$ consists of two steps :  
\begin{enumerate}[label=\protect\circledSmall{ \color{white} \arabic*}]
\item \label{item_step_superelliptic}
First, we will compute the number of points on a certain superelliptic curve $C_{n, b}$, given by $v^{3^n+1} = u^3 + bu$ over $\field$ for every $j \geq 1$, where $b \in \F_{3^n}^\times$ is chosen as in \cref{thm_L_function}.

\item \label{item_step_sums_sigmab}
Secondly, we study the sums
\nomenclature[1\(S\)]{$\sigma_b(j, t)$}
{Sum of $\lambda_{ \F_{3^{2 nj}} }(x^3 + b x + t)$ over $x \in \F_{ 3^{2 nj } }$}{nomencl:sigmabjt}
\begin{equation}\label{eq_definition_sums_sigmab}
\sigma_b(j, t)	:=	\sum_{  x \in \F_{ 3^{2 nj } }  }  \lambda_{ \F_{3^{2 nj}} }(x^3 + b x + t),
\end{equation}
where $t \in \F_{3^{2 nj}}$ and $j \geq 1$ is any integer.
\end{enumerate}

\mathversion{bold}
\subsection{Number of points on the superelliptic curve $C_{n, b}$}\label{subsec_superelliptic_curve}
\mathversion{normal}

For any $n \geq 1$, let $C_{n, b}^{\rm aff}$ be the affine curve $v^{3^n + 1} = u^3 + b u$, defined over $\F_{3^n}$, where $b \in \F_{3^n}^\times$ satisfies $b^{(3^n - 1)/2} = (-1)^{n+1}$ as in the statement of \cref{thm_L_function}. Note that $C_{n, b}^{\rm aff}$ is smooth.
\nomenclature[1\(C\)]{$C_{n, b}$}{Superelliptic curve with affine model $v^{3^n + 1} = u^3 + b u$ over $\F_{3^n}$}{nomencl:superelliptic}

There is a smooth projective irreducible curve $C_{n, b}$ over $\F_{3^n}$ (unique up to isomorphism) 
such that its function field is the same as the one of $C_{n, b}^{\rm aff}$. We say that $C_{n, b}$ is a \emph{superelliptic curve}.

It turns out that $C_{n,b}$ has a unique point at infinity, defined over $\F_{3^n}$ (see proposition 2 in \cite{Galbraith_Paulus_Smart_Superelliptic}), so that
$|C_{n, b}(k)| 		=		 \big| C_{n, b}^{\rm aff}(k) \big| + 1$ for every finite extension $k$ of $\F_{3^n}$.

The key point is that we will be able to deduce the number of points $|C_{n, b}(  \field )| $, for all $j \geq 1$, just from the computation of $|C_{n, b}(  \F_{ 3^{2n} }  )|$. %
Now, we can compute $|C_{n, b}(  \F_{ 3^{2n} }  )|$ because the norm map
$$\F_{3^{2n}}^{\times}  \longrightarrow \F_{3^{n}}^{\times},   
\qquad
v \longmapsto v^{3^n} \cdot v = v^{3^n + 1} =: w$$
is a surjective morphism, with kernel of size $\dfrac{3^{2n} - 1}{3^n - 1} = 3^n  +  1$. 

Therefore, we get
\begin{equation}\label{eq_superelliptic_curve_card1}
|C_{n, b}(  \F_{ 3^{2n} }  )|	
= 1 + 3 + (3^n + 1) \sum_{w \in		 \F_{3^{n}}^{\times}	 }  \# \{ u \in \F_{3^{2n}}		\tq		u^3 + b u = w  \}
\end{equation}

We determine each term in the latter sum in the following lemma (applied to the case where $p := 3$).
\begin{lemme}
\label{lemma_linear_algebra}
Let $p$ be an odd prime, $n \geq 1$ be an integer, set $q = p^n$ and let 
$b \in \F_{p^n}^\times$ be any element such that 
\begin{equation}\label{eq_hypothesis_on_b}
\mathrm{Nr}_{\F_q / \F_p}(b)			\; = \;			b^{	\tfrac{p^n - 1}{p - 1}	} 	= (-1)^{n+1}.
\end{equation}
 Then we have 
\[  \#\{	x \in \F_{q^2}	\tq		x^p + b x \in \F_q	\} =	
p^{n+1} = p \cdot q.		\qedhere 		\] 
\end{lemme}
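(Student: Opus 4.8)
The plan is to treat $L(x) := x^p + bx$ as an $\F_p$-linear endomorphism of $\F_{q^2}$ (it is additive because $x \mapsto x^p$ is the Frobenius and $x \mapsto bx$ is $\F_p$-linear) and to count the preimage $L^{-1}(\F_q)$. The first step is to rewrite the membership condition $L(x) \in \F_q$ using that $\F_q = \{y \in \F_{q^2} : y^q = y\}$. Since $b \in \F_q$, one has $L(x)^q = L(x^q)$, so $L(x) \in \F_q$ is equivalent to $L(x^q - x) = 0$, i.e. to $\phi(x) \in \ker L$ where $\phi(x) := x^q - x$. Thus $L^{-1}(\F_q) = \phi^{-1}(\ker L)$, and for the additive map $\phi$ one has $|L^{-1}(\F_q)| = |\ker \phi| \cdot |\ker L \cap \im \phi|$. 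Here $\ker \phi = \F_q$ has size $q$, and $\im \phi$ is the kernel of $\mathrm{Tr}_{\F_{q^2}/\F_q}$ (every $x^q - x$ has zero trace, and a dimension count over $\F_q$ gives equality). So it remains to show $|\ker L \cap \im \phi| = p$.

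Next I would describe $\ker L$ explicitly. A nonzero $x$ lies in $\ker L$ iff $x^{p-1} = -b$; since $p-1 \mid q^2 - 1$, the $(p-1)$-th roots of unity of $\F_{q^2}$ are exactly $\F_p^\times$, so as soon as one solution $x_0$ exists, $\ker L = \F_p\, x_0$ has size $p$. To guarantee existence I would verify that $-b$ is a $(p-1)$-th power in $\F_{q^2}^\times$, i.e. $(-b)^{(q^2-1)/(p-1)} = 1$. Writing $M := (q-1)/(p-1)$, so that $b^M = \mathrm{Nr}_{\F_q/\F_p}(b)$, and using $q^2 - 1 = (q-1)(q+1)$, this reduces to computing $(-b)^M$: the hypothesis gives $b^M = (-1)^{n+1}$, while $(-1)^M = (-1)^n$ because $M = 1 + p + \dots + p^{n-1}$ is a sum of $n$ odd terms, whence $(-b)^M = -1$; raising to the even power $q+1$ then yields $1$. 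So $x_0$ exists and $\ker L$ has size $p$.

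Finally I would check the containment $\ker L \subseteq \im \phi$, which forces $\ker L \cap \im \phi = \ker L$. This is precisely the assertion that $x_0$ has zero trace: from $x_0^{p-1} = -b$ and $(p-1)M = q-1$ one gets $x_0^{q-1} = (-b)^M = -1$, so $x_0^q = -x_0$ and $\mathrm{Tr}_{\F_{q^2}/\F_q}(x_0) = x_0 + x_0^q = 0$. Since trace is $\F_p$-linear, all of $\ker L = \F_p\, x_0$ lies in $\ker \mathrm{Tr}_{\F_{q^2}/\F_q} = \im \phi$. Combining the three steps, $|\ker L \cap \im \phi| = p$ and $|L^{-1}(\F_q)| = q \cdot p = p^{n+1}$.

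The step I expect to carry all the weight is not an obstacle so much as a piece of sign bookkeeping: the normalization $b^M = (-1)^{n+1}$ is calibrated exactly so that $(-b)^M = -1$, and this single identity delivers both facts at once — that $-b$ is a $(p-1)$-th power (via $q+1$ being even) and that $x_0$ has zero trace (via $x_0^{q-1} = -1$). Pinning down the parity $(-1)^M = (-1)^n$ and noting that $q+1$ is even because $q$ is an odd prime power are where the hypotheses genuinely enter; everything else is formal linear algebra over $\F_p$.
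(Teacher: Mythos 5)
Your proof is correct and takes essentially the same route as the paper's: both identify the set in question as the kernel of the composite of the two commuting $\F_p$-linear maps $\phi : x \mapsto x^q - x$ and $L : x \mapsto x^p + bx$, apply the same rank--nullity decomposition $\dim \phi^{-1}(\ker L) = \dim \ker \phi + \dim(\ker L \cap \im \phi)$, and hinge on the same sign computation $(-b)^{(q-1)/(p-1)} = -1$ extracted from the norm hypothesis. The only differences are cosmetic: you prove that a root $x_0$ of $x^{p-1} = -b$ exists in $\F_{q^2}$ via the cyclic-group power criterion and place $\ker L$ inside $\im \phi$ via the trace characterization $\im \phi = \ker \mathrm{Tr}_{\F_{q^2}/\F_q}$, whereas the paper starts from the roots in $\overline{\F_p}$ and uses the single identity $x^q = -x$ to conclude both that they lie in $\F_{q^2}$ and that $x = \phi(-x/2) \in \im \phi$.
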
 
\begin{proof}
Consider the maps $f, g_b : \F_{q^2} \to  \F_{q^2}$ defined by $f : x \mapsto x^q - x$ and $g_b : x \mapsto x^p + b x$.
The key point is that these maps are endomorphisms of the additive group $(\F_{q^2}, +)$ seen as vector space over $\F_p$, and we can describe the set
$\{	x \in \F_{q^2}	\tq		x^p + b x \in \F_q	\}$ as the kernel of $f \circ g_b$. Thereby, the proof essentially boils down to a basic argument of linear algebra. A direct computation shows that $f \circ g_b = g_b \circ f$ (using the fact that $b \in \F_q^\times$).

The rank-nullity theorem yields
\begin{equation}\label{eq_dimension_rank_thm}
\dim( \ker(g_b \circ f) ) 	=		\dim(\ker(f))		+		\dim( \ker(g_b) \cap \im(f) ),
\end{equation}
where the dimensions are taken over $\F_p$.

It is clear that $\dim(\ker(f)) = n$, since $q = p^n$, and that $\ker(g_b)$ has dimension $1$ since it consists of roots in $\overline{\F_p}$ of the separable polynomial $X^p + b X$ which has degree $p$, and all those roots actually lie in $\F_{q^2}$. Indeed, if $x^p = -bx$ then
\begin{align}
\begin{split}
x^{p^{n}} &= \hspace*{0.7ex}
(-b)^{ 1 + p + \cdots + p^{n-1}} \cdot x
= (-b)^{ \tfrac{p^n - 1}{p - 1} } \cdot x 		\overset{ \eqref{eq_hypothesis_on_b} }{=}
 (-1)^{ \tfrac{p^n - 1}{p - 1} } \cdot  (-1)^{n+1} x 
\\&
\hspace*{-0.7ex}\overset{p \text{ odd}}{=} (-1)^n  \cdot  (-1)^{n+1} x 			\;=\;		 -x,
\end{split}\label{eq_roots_belong_to_Fq}
\end{align}
which implies that $x^{q^2} = (x^q)^q = (-x)^q =x$, i.e., $x \in \F_{q^2}$ as claimed.

The above computation \eqref{eq_roots_belong_to_Fq} also shows that any element $x \in \ker(g_b)$ satisfies $x^{p^n} = -x$, so that $f(x) = -2x$, which shows that $x \in \im(f)$ (recall that $p$ is odd, so $-2 \in \F_p^{\times}$ is invertible). In other words, we have $\ker(g_b) \cap \im(f) = \ker(g_b)$. Finally we get $\dim( \ker(f \circ g_b)) = \dim( \ker(g_b \circ f))  = n + 1$ from \cref{eq_dimension_rank_thm}, which yields 
$$\#\{	x \in \F_{q^2}	\tq		x^p + b x \in \F_q	\} = | \ker(f \circ g_b) | = p^{n+1},$$ which is what we wanted to prove.
\end{proof}

Therefore, from \cref{eq_superelliptic_curve_card1} and the above \cref{lemma_linear_algebra} (applied to $p = 3$), we get
\begin{align*}
|C_{n, b}(  \F_{ 3^{2n} }  )|	
&= 1 + 3 + (3^n + 1)      	\left( \# \{ u \in \F_{3^{2n}}		\tq		u^3 + b u \in \F_{3^n}  \}   -   3 \right)
\\&=
1 + 3^n \cdot 3^{n+1}
\end{align*}

We now consider $C_{n, b}$ as a curve over $\F_{3^{2n}}$ (instead of a curve over $\F_{3^n}$).
Let us write $\omega_k$ for the eigenvalues of the Frobenius endomorphism of $x \mapsto x^{3^{2n}}$ acting on $H^1_{\et}(C_{n, b} \times \overline{\F_3}, \Q_{\ell})$, where $1 \leq k \leq 2 \cdot g(C_{n, b})$ and $g(C_{n, b})$ denotes the genus of $C_{n, b}$ and $\ell \neq 3$ is a prime. 
It is known from the Weil conjectures that the $\omega_k$ are the reciprocal of the roots of the numerator (in $\Z[T]$) of zeta function $Z(C_{n, b} / \F_{3^{2n}}, T)$ ; in particular, they can be seen as complex numbers and their modulus is known to be equal to $|\omega_k| = \sqrt{3^{2n}} = 3^n$.
Thereby, Lefschetz trace formula tells us that
\begin{align*}
|C_{n, b}(  \F_{ 3^{2n} }  )|	
= 3^{2n} + 1 - \sum_{k=1}^{2 g(C_{n, b})} \omega_k.
\end{align*}
The genus of $C_{n, b}$ is equal to $g(C_{n, b}) = 3^n$ (see proposition 2 in \cite{Galbraith_Paulus_Smart_Superelliptic}). 
Hence we get
\begin{align*}
|C_{n, b}(  \F_{ 3^{2n} }  )|	
=
1 + 3^{2n+1} = 3 \cdot 3^{2n} + 1 		=   3^{2n} + 1 - \sum_{k=1}^{2 \cdot 3^n} \omega_k,
\end{align*}
which implies $-2 \cdot 3^{2n} = \sum\limits_{k=1}^{2\cdot 3^n} \omega_k$. Because the $\omega_k \in \C$ satisfy $|\omega_k| = \sqrt{3^{2n}} = 3^n$, this forces $\omega_k = -3^n$ for every $k$ (e.g., by taking the real part of the latter sum). 
%
We conclude that for every $n \geq 1$ and every $j \geq 1$ :
\begin{align*}
|C_{n, b}(  \field  )|	 =
3^{2nj} + 1 - 2 \cdot 3^n \cdot (-3^n)^j.
\end{align*}
This completes the step \ref{item_step_superelliptic} announced above.
We can sum up what we have obtained above in terms of the zeta function of $C_{n, b}$ :
\begin{prop}\label{prop_zeta_function_superelliptic}
Let $n \geq 1$ be an integer and let $b \in \F_{3^n}^\times$ be as in \cref{thm_L_function}.
The zeta function of the superelliptic curve $C_{n, b}$ over $\F_{3^{2n}}$ is given by
\begin{equation*}
Z\big(		C_{n, b} / \F_{3^{2n}}	,\		T		\big)
\; = \;
\dfrac{			(1 + 3^n T)^{2 \cdot 3^n} 			}{ (1 - T)(1 - 3T) }.
\end{equation*}
In particular, for every $j \geq 1$, we have 
\[ |C_{n, b}(  \field  )|	 =
3^{2nj} + 1 - 2 \cdot 3^n \cdot (-3^n)^j. 		\qedhere \]
\end{prop}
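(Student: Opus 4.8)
The plan is to read off the zeta function from the eigenvalues of Frobenius, which the discussion preceding the statement has essentially already determined. For a smooth projective geometrically irreducible curve $C$ of genus $g$ over a finite field $\F_Q$, the Weil conjectures present its zeta function as
\[
Z(C/\F_Q, T) = \frac{\prod_{k=1}^{2g}(1-\omega_k T)}{(1-T)(1-QT)},
\]
where the $\omega_k$ are the eigenvalues of the $Q$-power Frobenius acting on $H^1_{\et}(C\times\overline{\F_3},\Q_\ell)$, each of absolute value $\sqrt{Q}$. I apply this with $Q = 3^{2n}$, so that $\sqrt Q = 3^n$, and with $g = g(C_{n,b}) = 3^n$ (the genus being given by the cited result of Galbraith--Paulus--Smart); thus there are exactly $2\cdot 3^n$ eigenvalues to determine.

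The one piece of genuine arithmetic input is the single point count $|C_{n,b}(\F_{3^{2n}})| = 1 + 3^{2n+1}$. First I would establish it as above: the norm map $\F_{3^{2n}}^\times \to \F_{3^n}^\times$, $v\mapsto v^{3^n+1}$, is surjective with kernel of size $3^n+1$, which reduces the count of affine points to the fibre cardinalities $\#\{u\in\F_{3^{2n}}: u^3+bu=w\}$; summing these via \cref{lemma_linear_algebra} (with $p=3$) gives the stated value. Inserting it into the Lefschetz trace formula $|C_{n,b}(\F_{3^{2n}})| = 3^{2n}+1-\sum_k\omega_k$ then yields $\sum_k\omega_k = -2\cdot 3^{2n}$.

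The decisive step --- and the only one requiring any thought --- is the rigidity argument that pins every eigenvalue to the same value. I have $2\cdot 3^n$ complex numbers, each of modulus exactly $3^n$, whose sum is $-2\cdot 3^n\cdot 3^n$. Since $\operatorname{Re}(\omega_k)\geq -3^n$ with equality precisely when $\omega_k = -3^n$, taking real parts of the sum (the equality case of the triangle inequality) forces $\omega_k = -3^n$ for every $k$. Hence the numerator of the zeta function is $\prod_k(1-\omega_k T) = (1+3^n T)^{2\cdot 3^n}$, which gives the claimed closed form for $Z(C_{n,b}/\F_{3^{2n}},T)$.

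Finally, the point counts over $\field$ for general $j\geq 1$ follow from the Lefschetz trace formula over the degree-$j$ extension, whose Frobenius eigenvalues are $\omega_k^j = (-3^n)^j$:
\[
|C_{n,b}(\field)| = 3^{2nj}+1-\sum_{k=1}^{2\cdot 3^n}\omega_k^j = 3^{2nj}+1-2\cdot 3^n\cdot(-3^n)^j,
\]
as asserted. The main obstacle is thus concentrated entirely in the two inputs I am permitted to assume, namely the exact value of $|C_{n,b}(\F_{3^{2n}})|$ (which rests on \cref{lemma_linear_algebra}) and the genus $g = 3^n$; once these are in hand, the Weil bounds leave no freedom and the rest is forced.
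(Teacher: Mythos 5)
Your proposal is correct and follows essentially the same route as the paper: the norm-map reduction combined with \cref{lemma_linear_algebra} to get the single point count $|C_{n,b}(\F_{3^{2n}})| = 1 + 3^{2n+1}$, then the Lefschetz trace formula with $g = 3^n$ and the equality case of $|\omega_k| = 3^n$ (taking real parts) to force $\omega_k = -3^n$ for all $k$. Nothing is missing; the rigidity step you single out as decisive is exactly the one the paper uses.
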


\mathversion{bold}
\subsection{Evaluating the sums \texorpdfstring{$\sigma_b(j, t)$}{sigma\textsuperscript b(j, t)} }\label{subsec_evaluation_sums_sigma}
\mathversion{normal}

This paragraph is devoted to the explicit computation of the sums $\sigma_b(j, t)$ defined in \cref{eq_definition_sums_sigmab}. Then we will conclude the proof of \cref{thm_L_function}.
\begin{lemme}\label{lemma_sums_sigmab}
Let $n \geq 1$ be an integer, set $q = 3^n$ and fix $b \in \F_{3^n}$ such that $\lambda_{\F_{3^n}}(b) = (-1)^{n+1}$.
Let $j \geq 1$ be any integer.
Consider the map $g_{b, j} : \F_{q^{2j}} \to  \F_{q^{2j}}$ defined by $g_{b, j} : x \mapsto x^3 + b x$.

Then for every $t \in \fieldJ$ we have :
\begin{align*}
\sigma_b(j, t)	 =
\begin{cases}
-2 \cdot (-3^n)^{j}		&\text{ if } t \in \im(g_{b, j}) \\
(-3^n)^{j}					&\text{ otherwise. }
\end{cases}
\end{align*} %
\end{lemme}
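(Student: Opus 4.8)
The plan is to exploit that $g_{b,j}\colon x \mapsto x^3 + bx$ is an \emph{additive} ($\F_3$-linear) endomorphism of $\F_{q^{2j}}$, exactly as in the proof of \cref{lemma_linear_algebra}. First I would group the terms according to their image: writing $W := \im(g_{b,j})$ and $N := |\ker(g_{b,j})|$, the map $g_{b,j}$ attains each value of $W$ exactly $N$ times, so
\[
\sigma_b(j,t) = \sum_{x \in \F_{q^{2j}}} \lambda_{\F_{q^{2j}}}\big(g_{b,j}(x) + t\big) = N \sum_{w \in W} \lambda_{\F_{q^{2j}}}(w + t).
\]
Since $g_{b,j}(x) = x(x^2+b)$ is separable with roots $0, \pm\sqrt{-b}$, and since $-b \in \F_q^\times$ is automatically a square in $\F_{q^2} \subseteq \F_{q^{2j}}$ (every element of $\F_q$ is a square in $\F_{q^2}$), the kernel has size $N = 3$. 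Hence $W$ has index $3$, and $\sigma_b(j,t)$ depends only on the class of $t$ in $\F_{q^{2j}}/W$, which is what ultimately produces the two-valued answer.

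Next I would pin down the three coset sums $\Sigma_C := \sum_{z \in C}\lambda_{\F_{q^{2j}}}(z)$, where $C$ runs over the cosets $W,\ W + t_0,\ W - t_0$ (they are of this shape because $3 t_0 = 0$ in characteristic $3$). Two relations suffice. Summing $\lambda_{\F_{q^{2j}}}$ over the whole field gives $\Sigma_W + \Sigma_{W+t_0} + \Sigma_{W-t_0} = 0$. Moreover $-1$ is a square in $\F_{q^{2j}}$, because $|\F_{q^{2j}}| = 3^{2nj} \equiv 1 \pmod 4$, so $\lambda_{\F_{q^{2j}}}(-z) = \lambda_{\F_{q^{2j}}}(z)$; applying $z \mapsto -z$ fixes $W$ and interchanges $W+t_0$ with $W-t_0$, whence $\Sigma_{W+t_0} = \Sigma_{W-t_0} = -\tfrac12\Sigma_W$. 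The whole statement thus reduces to computing the single value $\sigma_b(j,0) = 3\,\Sigma_W$ (note $0 \in W$, so $t=0$ lands in the image case).

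For this last value I would reinterpret $\sigma_b(j,0) = \sum_{x}\lambda_{\F_{q^{2j}}}(x^3+bx)$ as a point count on the (supersingular) elliptic curve $E_0\colon y^2 = x^3 + bx$, namely $\sigma_b(j,0) = -a$ where $a$ is the trace of Frobenius of $E_0$ over $\F_{q^{2j}}$. The crucial input — and the only place the hypothesis on $b$ enters — is that the trace of $E_0$ over the base field $\F_q$ already vanishes. Indeed $\lambda_{\F_q}(-b) = \lambda_{\F_q}(-1)\lambda_{\F_q}(b) = (-1)^n(-1)^{n+1} = -1$, so $-b$ is a \emph{non}-square in $\F_q$; therefore $g_b$ has trivial kernel on $\F_q$, hence is a bijection of $\F_q$, and $\sum_{x \in \F_q}\lambda_{\F_q}(x^3+bx) = \sum_{y\in\F_q}\lambda_{\F_q}(y) = 0$. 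Writing $\pi,\bar\pi$ for the Frobenius eigenvalues of $E_0/\F_q$, this forces $\pi+\bar\pi = 0$ and $\pi\bar\pi = q$, so $\pi^2 = \bar\pi^2 = -q$; the trace over $\F_{q^{2j}} = \F_{(q^2)^j}$ is then $\pi^{2j}+\bar\pi^{2j} = 2(-q)^j$, giving $\sigma_b(j,0) = -2(-3^n)^j$.

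Combining the three steps yields $\Sigma_W = -\tfrac23(-3^n)^j$ and the common non-image value $3\,\Sigma_{W\pm t_0} = (-3^n)^j$, which is exactly the claimed dichotomy. I expect the final trace computation to be the main obstacle: one must use the arithmetic condition on $b$ to force $a_q(E_0)=0$ and then propagate the Weil numbers correctly through the extension $\F_q \subseteq \F_{q^{2j}}$, whereas the additive reduction and the $-1$-is-a-square symmetry are routine. As a consistency check, the two resulting values reproduce the second moment $\sum_{t}\sigma_b(j,t)^2 = 2\,|\F_{q^{2j}}|^2$, which can be computed directly from the additivity of $g_{b,j}$ alone.
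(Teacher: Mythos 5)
Your proof is correct, and it reaches the two claimed values by a genuinely different route at the decisive step. Your Step 1 (additivity of $g_{b,j}$, kernel of size $3$ because $-b$ becomes a square in $\F_{q^2}$, the symmetry from $\lambda_{\F_{q^{2j}}}(-1)=1$, and the vanishing of the full sum, reducing everything to $\sigma_b(j,0)$) is exactly the paper's Step 1. For $\sigma_b(j,0)$, however, the paper does not compute a trace over the base field: it uses the surjection $C_{n,b}\to\mathcal{E}_b$, $(u,v)\mapsto\bigl(u,v^{(3^n+1)/2}\bigr)$, the divisibility of numerators of zeta functions along surjective morphisms (Aubry--Perret), and the already-established zeta function of the superelliptic curve (\cref{prop_zeta_function_superelliptic}) to conclude that the numerator of $Z(\mathcal{E}_b/\F_{3^{2n}},T)$ is $(1+3^nT)^2$, whence the point count over $\F_{3^{2nj}}$. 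You instead prove directly that the trace of $y^2=x^3+bx$ over $\F_q$ vanishes: the hypothesis gives $\lambda_{\F_q}(-b)=\lambda_{\F_q}(-1)\,\lambda_{\F_q}(b)=(-1)^n(-1)^{n+1}=-1$, so $x\mapsto x^3+bx$ is injective, hence bijective, on $\F_q$ and the character sum over $\F_q$ vanishes; then the Weil numbers satisfy $\bar\pi=-\pi$, $\pi^2=-q$, and the trace over $\F_{q^{2j}}$ is $2(-q)^j$. Your route is more elementary and self-contained: it needs neither the superelliptic curve nor the zeta-divisibility result, it isolates exactly where the hypothesis on $b$ enters, and it works uniformly in $n$ --- strengthening the paper's own remark, which only offers a direct variant of Step 2 for odd $n$ (via $x\mapsto -x$). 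What the paper's route buys is economy within its own architecture: \cref{prop_zeta_function_superelliptic} is needed anyway to count $|\Gamma_b(n,j)|$ in the proof of \cref{thm_L_function}, so deducing the supersingularity of $\mathcal{E}_b$ from it costs nothing extra, and the dominance $C_{n,b}\to\mathcal{E}_b$ explains conceptually why $\mathcal{E}_b$ is supersingular, namely as an isogeny factor of the Jacobian of $C_{n,b}$.
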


\newlength{\ItemLength}
\setlength{\ItemLength}{\widthof{\bfseries Step 9 -- }}
\begin{proof}
\begin{enumerate}[label={\bfseries Step \arabic* --}, leftmargin=0pt, labelwidth=-\ItemLength] 
\item \label{item_pf_lemma_sums_sigmab_1}
The first key point here is to use again the fact that the map $g_{b, j}$ is additive, in order to deduce that $\sigma_b(j, t)$ takes only two values (for fixed $j, b$ and variable $t$).

Indeed, if we pick any $x_0 \in \fieldJ$, then
\begin{equation*}
\begin{split}
\sigma_b(j, t)	
&\overset{		\eqref{eq_definition_sums_sigmab}	}{=}	
\sum_{  x  \in \fieldJ  }  \lambda_{ \fieldJ }		\!\left(  g_{b, j}(x)  + t		\right)
=
\sum_{  x' \in \fieldJ  }  \lambda_{ \fieldJ }	\!\left(		g_{b, j}(x' + x_0)  + t		\right)
\\&\;\;= 
\sum_{  x' \in \fieldJ  }  \lambda_{ \fieldJ }\left( g_{b, j}(x') \;+\; g_{b, j}(x_0)  + t \right)
= \sigma_b(j, t + g_{b, j}(x_0)).
\end{split}
\end{equation*}
In other words, $\sigma_b(j, t)$ only depends on the class of $t$ in the quotient additive group $\fieldJ \big/ \im(g_{b, j})$.
Moreover, notice that
\begin{equation*}
\begin{split}
\sigma_b(j, t)	
&=	\sum_{  x' \in \fieldJ  }  \lambda_{ \fieldJ }\big( g_{b, j}(-x')  + t \big)
= 		\sum_{  x' \in \fieldJ  }  \lambda_{ \fieldJ }\big( \!-\! g_{b, j}(x') + t \big)
\\&= 
\lambda_{ \fieldJ }(- 1)	\cdot \sigma_b(j, -t) 
= \sigma_b(j, -t),
\end{split}
\end{equation*}
where the last equality holds because $-1$ is a square in $\F_{3^2}$ and hence in $\field$.

Since $[ \fieldJ : \im(g_{b, j}) ] = |\ker(g_{b, j})| = 3$ (because $-b \in \F_q$ is a square in $\F_{q^2} \into \F_{q^{2j}}$), 
we deduce that $\sigma_b(j, t)$ only takes two values (for fixed $j, b$ and variable $t$). The first value
occurs when $t \in \im(g_{b, j})$ in which case $\sigma_b(j, t) = \sigma_b(j, 0)$. Let us denote by $\sigma^*$ the other value of $\sigma_b(j, t)$, which occurs when $t \not\in \im(g_{b, j})$. 
Observe that the value of $\sigma^*$ can be deduced from the sum
\[ \sum_{ 		\mathclap{t \in 		 \field} 	   }    \sigma_b(j, t)   =  
|\im(g_{b, j})|  \cdot    \sigma_b(j, 0) 			\;+\;			\left(3^{2 nj} - |\im(g_{b, j})|\right)  \cdot \sigma^*
=
3^{2 nj} \left(  \frac{1}{3} \sigma_b(j, 0)		\;+\;		\frac{2}{3} \sigma^*  \right) \]
because the left-hand side sum vanishes :
\[ 
\sum_{ 		\mathclap{t \in 	\field } 	   }    \sigma_b(j, t)
\;=
\sum_{ x \in  	\field  }
\sum_{t \in 		 \field }   		 \lambda_{ \field }( x^3 + b x + t ) = 0,
 \]
 since all the inner sums are $0$ (they are sums of a non-trivial multiplicative character over the whole group -- recall also that $\lambda_{ \field }(0) = 0$).
 Therefore $\sigma^* = -\frac{1}{2} \sigma_b(j, 0)$, so it is enough to determine the value of $\sigma_b(j, 0)$.

\medskip
\item 
Now we compute the sum
$\sigma_b(j, 0)	= \sum\limits_{x \in \fieldJ} 		\lambda_{	\fieldJ	}(x^3 + bx)$.

The most conceptual (and easiest, or shortest) proof relies on the fact that if ${\pi : Y \to X}$ is a surjective morphism between two smooth irreducible projective algebraic curves (or even varieties) defined over a finite field, then the numerator of the zeta function of $X$ divides the one of $Y$ in $\Z[T]$. This can be argued using the Tate modules of the jacobians of these curves, see for instance proposition 5 in \cite{Aubry_Perret_Zeta_divisibility}.

In our case, we have the morphism
\[
\pi : C_{n, b} \to \mathcal{E}_b  \qquad		(u, v) \mapsto \Big(	u, v^{ 	\tfrac{3^n + 1}{2}	 }	\Big)
\]
where $\mathcal{E}_b$ is the elliptic curve given by $y^2 = x^3 + bx$ over $\F_{3^n}$ (we defined the morphism on the affine charts, but it extends uniquely to a morphism between the smooth projective curves $C_{n, b} \to \mathcal{E}_b$). 
Being a non-constant morphism between irreducible curves, $\pi$ must be surjective.

The numerator of $Z(C_{n, b} / \F_{3^{2n}}, T)$ is $(1 + 3^n T)^{2 \cdot 3^n}$ by \cref{prop_zeta_function_superelliptic}.
Therefore, the numerator of $Z(\mathcal{E}_b /  \F_{3^{2n}}, T)$ is $(1 + 3^n T)^{2}$ (which implies that $\mathcal{E}^b$ is supersingular). 
Thus we deduce from standard arguments (see \cite{SilvermanAEC}, application V.1.3 and theorem V.2.3.1) that 
\begin{align*}
1 + 3^{2nj} + \sigma_b(j, 0)
\;=\;
|\mathcal{E}_b( \field )|
\;=\;
1 + 3^{2nj} - 2 (-3^n)^{j},
\end{align*}
which gives the claimed value for $\sigma_b(j, 0)$. Therefore, from \hyperref[item_pf_lemma_sums_sigmab_1]{step 1} we get the value $\sigma^* = (-3^n)^j$ and this finishes the proof.
\qedhere
\end{enumerate}
\end{proof}

\begin{rmq}
It is possible to give more concrete and elementary (but computationally longer) proofs of the identity $\sigma_b(j, 0) = -2 \cdot (-3^n)^j$ from \cref{lemma_sums_sigmab}, via quartic Jacobi sums.  

Moreover, when $n$ is odd, one can also give a direct proof of the step 2 above, because the change of variables $x \mapsto -x$ allows to determine the number of points of the elliptic curve $\mathcal{E}_b : y^2 = x^3+bx$ over $\F_{3^n}$ (because $-1$ is not a square in $\F_{3^n}$) 
and hence over any field extension thereof. 
\end{rmq}

We are now in position to prove our main result.

\begin{proof}[Proof of \cref{thm_L_function}]
By the identity just below \cref{eq_definition_sums_Sb}, we recall that\linebreak 
$\displaystyle \log L(E_{n, b} / \F_{q^2}(t), T) = \sum\limits_{j \geq 1} 	S_b(n, j) \frac{ T^j }{ j }.$

From \cref{eq_sum_Sb_as_Legendre,eq_definition_sums_sigmab}, one can write
\begin{equation*}
- S_b(n, j)		=	  	\sum_{		w \in \field		}		\sigma_b(j, w^{3^n+1})
\end{equation*}
(be careful of the minus sign).
Define the set
\[
\Gamma_b(n, j)	:=		\left\{		w \in \field		\tq		w^{3^n+1} \in \im(g_{b, j})		\right\},
\]
where $g_{b, j} : \field \to \field$ denotes the map $x \mapsto x^3+bx$ as in \cref{lemma_sums_sigmab}.

Notice that all the fibers of the map
$$C_{n, b}^{\rm aff}(  \field  )  			\longrightarrow  			\Gamma_b(n, j),  	\qquad
	(u, v) \longmapsto v$$
have size $3$ (they have the shape $\{ (u, v) ; (u \pm \beta, v) \}$, where $\beta \in \F_{3^{2n}} \into \field$ is an element such that $\beta^2 = -b$).
Thereby, we deduce from \cref{prop_zeta_function_superelliptic} that 
\begin{equation}\label{eq_Gamma_values}
|\Gamma_b(n, j)| = \dfrac{1}{3}( |C_{n, b}(  \field  )|  		-		1)
=
\dfrac{1}{3}( 3^{2nj} - 2 \cdot 3^n \cdot (-3^n)^j )
\end{equation}

Therefore, using \cref{lemma_sums_sigmab} and the above expression of $S_b(n, j)$, we get
\begin{align*}
- S_b(n, j)		&=
-2 \cdot (-3^n)^{j} \cdot |\Gamma_b(n, j)|			\quad + \quad			(-3^n)^{j} \cdot \big( 3^{2nj} - |\Gamma_b(n,j)| \big)
\\ & = (-3^n)^{j} \cdot 	\big( 3^{2nj} - 3  \cdot |\Gamma_b(n,j)|	\big)
\\& \overset{\eqref{eq_Gamma_values}}{=} 
	(-3^n)^{j} \cdot 2 \cdot 3^n \cdot (-3^n)^j
\\&= 2 \cdot 3^{n(1+2j)} = 2 q^{1+2j},
\end{align*}

Finally, we conclude that
\begin{align*}
\log L(E_{n, b} / \F_{q^2}(t), T) 
&= \sum\limits_{j \geq 1} 	S_b(n, j) \frac{ T^j }{ j } \\
&= -2q \sum\limits_{j \geq 1}  \frac{ (q^2 T)^j }{ j } \\
&= 2q \cdot \log(1 - q^2 T),
\end{align*}
which precisely means that
\[
L\left( E_{n, b} / \F_{q^2}(t)	,\;	 T \right)  	=   (1 - q^2 T)^{2 \cdot 3^n},
\]
as desired. This finishes the proof.
\end{proof}

\begin{rmq}\label{rmk_Shioda_algo_2}
%
%
%
We explain why the case of characteristic $3$ is very special. For an odd prime $p$, the elliptic surface (of Delsarte type in Shioda's terminology from \cite{Shioda_algorithm_Picard_nb}) associated to $E : y^2=x^3+x+t^m$ over $\F_p$ is birationally equivalent to a quotient of the Fermat surface $\mathcal{F}_d$ of degree $d$, where $d := \frac{4m}{ \mathrm{gcd}(2, m) }$. 
So one can follow the approach taken in \cite{Griffon_Ulmer} to express the $L$-function of $E$ in terms of Jacobi sums, like $j(\theta, \theta^2) = \sum_{x \in k} \theta(x) \theta^2(1-x)$ for some suitable multiplicative characters $\theta$ (of order dividing $d$) on finite extensions $k$ of $\F_p$.

If $p^e \equiv -1 \pmod d$ for some integer $e \geq 1$, then one can apply \cite[Proposition 8.1]{Ulmer_Large_Rank} to compute explicitly those Jacobi sums. However, in our case where $m = p^n + 1$, this condition is not fulfilled so in general this does not allow to compute $j(\theta, \theta^2)$ directly. But in characteristic $p=3$, we have (when $\theta^6$ is not trivial) $j(\theta, \theta^2) = \dfrac{ g(\theta) g(\theta^2) }{ g(\theta^3) } = g(\theta^2)$, where $\displaystyle g(\chi) := \sum_{x \in k} \chi(x) \exp\Big( \frac{2 \pi i}{p} \mathrm{tr}_{k / \F_p}(x) \Big)$ is the Gauss sum corresponding to a multiplicative character $\chi$ on $k$.  We can then apply Tate--Shafarevitch's lemma \cite[Lemma 8.3]{Ulmer_Large_Rank} to compute $g(\theta^2)$ explicitly in the case $m = 3^n + 1$.
%
\end{rmq}

{\hypersetup{linkcolor=black!75}
\section[Proof of corollary A]{Proof of  \texorpdfstring{\cref{cor_packing_density_MWL}}{corollary A}}
\label{sect_pf_cor_A}
}

We now turn to the proof of the corollary concerning the narrow Mordell--Weil lattice attached to the elliptic curves $E_{n, b}$ (see \cref{def_narrow_MWL}), and the lower bound on its sphere packing density (see \cref{def_center_density}).

Estimating the sphere packing density of a lattice $L$ requires three steps :
\begin{enumerate}
\item Determine the rank of $L$. In the case of the Mordell--Weil lattice of $E_{n, b}$, this is essentially done in \cref{thm_L_function}.

\item Get an upper bound on the covolume of $L$. In our case, this is achieved by using the so-called Birch--Swinnerton-Dyer formula which we discuss below.

\item Finally, get a lower bound on the minimal non-zero norm in $L$. In the context of the narrow Mordell--Weil lattices, we use a result of Shioda (see \cref{thm_Lower_bound_on_minimal_norm} below).
\end{enumerate}

\subsection{Birch--Swinnerton-Dyer conjecture and formula}

We briefly recall what the Birch--Swinnerton-Dyer (BSD) conjecture is, and what is known about it. Originally, it was stated for elliptic curves over $\Q$, but it was then generalized to abelian varieties over any global field. However, for the sake of simplicity, we will stick to the case of elliptic curves over function fields, as given in \cite[conjecture 2.10]{Gross_lectures_BSD}. 

Theorem 2.6 \emph{ibid.} states that the $L$-function $L(E/K, T)$ of any non-constant elliptic curve over a global function field is a polynomial in $T$ with integral coefficients. In the case of the curves $E_{n, b}$ defined above, \cref{thm_L_function} provides a proof of the fact $L(E/K, T) \in \Z[T]$. In particular, this allows us to speak of the order of vanishing of the $L$-function at any given value of $T$ in $\C$.
Before stating the conjecture, we introduce some (standard) notations :
\begin{df}\label{def_Reg_and_special_value}
Let $k$ be a finite field, and let $X$ be a smooth projective geometrically irreducible algebraic curve over $k$. Denote by $g_X$ its genus. Set $K = k(X)$ and let $E$ be an elliptic curve over $K$.
\begin{enumerate}
\item 
\nomenclature[1\(R\)]{$\Reg(E/K)$}{Regulator of an elliptic curve $E/K$}{nomencl:Regulator}
Given the Néron--Tate height $\hat{h} : E(K) \to \R_{\geq 0}$  as in \cref{eq_def_Neron_Tate_height}, we define the pairing
\[	\langle - , - \rangle	: E(K) \to \R,		\quad 	
(P, Q) \mapsto \frac{1}{2} \cdot \left( \hat{h}(P+Q) - \hat{h}(P) - \hat{h}(Q) 	\right).  \]
Then the \emph{regulator} of $E/K$ is the discriminant of this pairing, and we denote it by
$\Reg(E/K) := \det\Big(  (\langle P_i, P_j \rangle)_{ \scriptsize 1 \leq i, j \leq r } \Big)$, where $\{P_1, ..., P_r\}$ is any $\Z$-basis of the free abelian group $E(K) \big/ E(K)_{\tors}$ (we set $\Reg(E/K) = 1$ by convention if the rank is $r=0$).

\item 
\nomenclature[1\(L\)]{$L^*(E/K)$}{Special value of the $L$-function of $E/K$ at $s=1$}{nomencl:SpecialValueL}
We further set the \emph{special value} of the $L$-function of $E/K$ to be
\[	L^*(E/K)	
\; := \;
		\dfrac{1}{\rho !} L^{(\rho)}(E/K, T) \Big\vert_{T = |k|^{-1}}
\]
where $\rho = \rho(E/K) := \ord_{T = |k|^{-1}} L(E / K, T)$ denotes the \emph{analytic rank}.

\item 
The Tate--Shafarevitch group is defined as
\[
\Sha(E / K) := \ker\Big( 
H^1(G_K, E(K^{\sep}))		\overset{\mathrm{res}}{\longrightarrow}
\prod_{v \text{ places of } K}	
H^1(G_{K_v}, E(K_v^{\sep}))	  \Big).
\]
where $G_K := \Gal(K^{\sep} / K)$ denotes the absolute Galois group (and same for each $K_v$), and the map is induced by the restriction of cocycles from $G_K$ to $G_{K_v}$ (using embeddings $K^{\sep} \into K_v^{\sep}$).

\item 
\nomenclature[1\(H\)]{$H(E/K)$}{Height of an elliptic curve $E/K$, given by $H(E/K) = "|k"|^{ \deg(\Delta_{\min}) / 12 }$}{nomencl:HeightEllCurve}
Finally, we define the \emph{height} of $E/K$ as 
$	H(E/K) 		:=		 |k|^{ 		 \tfrac{	\discr{E/K}	}{ 12 } 		}.	$
\qedhere
\end{enumerate}
\end{df}

\begin{rmq}
Because the $L$-function is a rational function in $\Q(T)$, we also have $L^*(E/K) = \left. \dfrac{ L(E/K, T) }{ (1-|k|T)^\rho } \right|_{T = |k|^{-1}}$ and this is a non-zero rational number.

There is another normalization of the Néron--Tate height, which is $\hat{h'} := \log(|k|) \cdot \hat{h}$, as in \cite{Gross_lectures_BSD} (lecture 3, §2). 
In that case, for the \hyperref[item_BSD_formula]{BSD formula} to be true, one has to take the special value of the \emph{complex} $L$-function, namely the value $\mathcal{L}^*(E/K)$ such that 
$$\mathcal{L}(E/K,s) := L(E/K, |k|^{-s})	\sim  \mathcal{L}^*(E/K)  \cdot  (s-1)^{\rho},
\qquad\text{as $s \to 1$.}$$

The two normalizations are consistent. Indeed, on the one hand, if one defines
$\Reg'(E/K)$ as the discriminant with respect to the pairing associated to $\hat{h'}$ (as in \cref{def_Reg_and_special_value}), then one has $\Reg'(E/K) = \log(|k|)^r \, \Reg(E/K)$. 
On the other hand, since $1 - |k|^{1-s} \sim \log(|k|) ( s-1 )$ as $s \to 1$, we have $\mathcal{L}^*(E/K) = \log(|k|)^r  \, L^*(E/K)$.		

We make the choice of using $\hat{h}$ and not $\hat{h'}$ because then the narrow Mordell--Weil group $E(K)^0$ (\cref{def_narrow_MWL}) becomes an \emph{integral} lattice (see \cref{thm_Lower_bound_on_minimal_norm}).
\end{rmq}

\begin{conj}[Birch--Swinnerton-Dyer]\label{conj_BSD_statements}
Let $k$ be a finite field, and let $X$ be a smooth projective geometrically irreducible curve over $k$. Denote by $g_X$ its genus.
Let $E$ be an elliptic curve over the function field $K := k(X)$. 

Then the following statements hold:
\begin{enumerate}[a)]
\item \label{item_BSD_equality_ranks}
The rank of the finitely generated\footnote{This result of finite generation of $E(K)$ is known as Mordell--Weil theorem, further extended by Néron and Lang.}  abelian group $E(K)$ is equal to the order of vanishing of the $L$-function of $E/K$ at $T = |k|^{-1}$, i.e.,
\[		\rk_{\Z}\!\big( E(K)	\big)		=		\ord_{T = |k|^{-1}} L(E / K, T). 				\]

\item \label{item_BSD_formula}
The Tate--Shafarevitch group $\Sha(E/K)$ is finite and we have the following identity, called BSD formula (using notations from \cref{def_Reg_and_special_value,def_Tamagawa_nb_discr_conductor}):
\begin{equation}\label{eq_BSD_formula}
L^*(E/K)		\;=\;		
\dfrac{ |\Sha(E/K)| \cdot \Reg(E/K)  \cdot c(E/K) }
		{ |E(K)_{\tors}|^{2}   \cdot  |k|^{g_X - 1}  \cdot H(E/K)  }.
\end{equation}
\end{enumerate}
\end{conj}

\begin{thm}[Artin, Tate, Milne]\label{thm_BSD_true_for_isotrivial}
Let $E$ be an elliptic curve over the function field $K := k(X)$, where $k$ is a finite field, as in \cref{def_Reg_and_special_value}.

\begin{enumerate}
\item The statements \ref{item_BSD_equality_ranks} and \ref{item_BSD_formula} in \cref{conj_BSD_statements} are all equivalent.

\item Assume that $E$ is a potentially constant (= isotrivial) elliptic curve, i.e., there is a finite extension $K' / K$ such that the base change $E \times_K K'$ is isomorphic to $E' \times_k K'$ for some (constant) elliptic curve $E'$ defined over $k$.

Then all the statements of \cref{conj_BSD_statements} are true.
\qedhere
\end{enumerate}
\end{thm}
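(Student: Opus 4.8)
The plan is to translate the entire statement into the geometry of the elliptic surface attached to $E$, and then to quote the deep equivalences of Tate and Milne rather than reprove them. For part (1), I would first attach to $E/K$ its relatively minimal regular model $\pi : \mathcal{S} \to X$, a smooth projective surface over the finite field $k$. The classical dictionary then reads as follows: by the Shioda--Tate formula the rank of $E(K)$ is governed by the rank of the Néron--Severi group $\mathrm{NS}(\mathcal{S})$ together with the contributions of the reducible fibres; the Tate--Shafarevich group $\Sha(E/K)$ is identified (via Grothendieck's exact sequences) with the Brauer group $\mathrm{Br}(\mathcal{S})$; and the regulator $\Reg(E/K)$, the Tamagawa product $c(E/K)$, the degree $\discr{E/K}$ (hence the height $H(E/K)$), the torsion, and the power of $|k|$ all reassemble precisely into the terms of the Artin--Tate formula for $\mathcal{S}$. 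Under this correspondence, statement \ref{item_BSD_equality_ranks} of \cref{conj_BSD_statements} becomes the Tate conjecture $T^1(\mathcal{S})$ (the pole order of $Z(\mathcal{S}/k,T)$ at $T=|k|^{-1}$ equals $\rk\,\mathrm{NS}(\mathcal{S})$), statement \ref{item_BSD_Sha_finite} becomes finiteness of $\mathrm{Br}(\mathcal{S})$, and statement \ref{item_BSD_formula} becomes the Artin--Tate formula.

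With the dictionary in place, the equivalence asserted in part (1) follows directly from the theorem of Tate (from the Artin--Tate seminar), as completed by Milne: for a smooth projective surface over a finite field, the Tate conjecture $T^1$, the finiteness of the Brauer group, and the validity of the Artin--Tate formula are mutually equivalent. Transporting these equivalences back through the dictionary above gives exactly the equivalence of \ref{item_BSD_equality_ranks}, \ref{item_BSD_Sha_finite}, \ref{item_BSD_formula}, so no independent argument is needed once the translation is set up carefully.

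For part (2), I would reduce to the constant case. If $E$ becomes constant over a finite extension $K'=k'(X')$, say $E\times_K K' \cong E'\times_{k'} K'$, then after base change to $k'$ and pullback along the finite cover $X'\to X_{k'}$ the surface $\mathcal{S}$ becomes birational to the product $E'\times_{k'} X'$ of two smooth projective curves over the finite field $k'$. The essential input is that $T^1$ is known for a product of two curves over a finite field: this follows from Tate's theorem describing homomorphisms between abelian varieties over finite fields through their Tate modules (equivalently, from the Tate conjecture for abelian varieties). Since $T^1$ is a birational invariant of surfaces, is compatible with dominant morphisms and finite correspondences, and is insensitive to finite extension of the base field, it descends from $E'\times X'$ to $\mathcal{S}$. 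Hence $T^1(\mathcal{S})$ holds in the isotrivial situation, and part (1) upgrades this to the full truth of \cref{conj_BSD_statements}.

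The main obstacle throughout is the $p$-part, where $p=\mathrm{char}(k)=3$. The identification $\Sha\leftrightarrow\mathrm{Br}$ and Tate's equivalences are cleanest away from $p$, being proved with $\ell$-adic étale cohomology for $\ell\neq p$; the delicate point is matching the exact power of $|k|$ in the formula and controlling the $p$-primary parts of $\Sha$ and $\mathrm{Br}$, which is precisely what Milne's flat/crystalline-cohomology arguments supply (and, for constant abelian varieties, what Milne's study of the Tate--Shafarevich group of a constant abelian variety settles). I would therefore lean on Milne's theorem to dispose of the $p$-part, checking only that its hypotheses — finiteness of the relevant cohomological groups, which is guaranteed once $T^1$ is established — are satisfied in the isotrivial case at hand.
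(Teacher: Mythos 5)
Your proposal is correct and is essentially the paper's own approach: the paper proves this theorem purely by citation (part (1) is Theorem 3.1 of Gross's lectures on BSD; part (2) is Theorem 12.2/8.1 of Ulmer's lectures), and your sketch is exactly the argument contained in those references. In particular, the dictionary (Shioda--Tate, the identification $\Sha(E/K) \cong \mathrm{Br}(\mathcal{S})$ for the relatively minimal elliptic surface with its zero section, and the Artin--Tate formula), the Tate--Milne equivalences of $T^1$, finiteness of $\mathrm{Br}$, and the Artin--Tate formula including the $p$-part, together with the reduction of the isotrivial case to the Tate conjecture for a product of two curves via Tate's theorem on abelian varieties over finite fields, are precisely the ingredients of the cited proofs.
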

\begin{proof}
The first part is proved in \cite[Theorem 8.1]{Milne_conjecture_Artin_Tate}.
%
The second claim is stated in lecture 1, theorem 12.2 of \cite{Ulmer_elliptic_curves_over_FF}, and is proved in lecture 3, theorem~8.1,~\emph{ibid}.
\end{proof}

\begin{prop}\label{prop_E_isotrivial}
The elliptic curve $E_{n, b}$ over $K = \F_{3^{2n}}(t)$ from \cref{thm_L_function} is isotrivial. More precisely, it is a \emph{cubic twist} of the constant curve $E' : y'^2 = x'^3 + b x'$ over $\F_{3^n}$.

Moreover, the Mordell--Weil group $E_{n, b}(K)$ is torsion-free.
\end{prop}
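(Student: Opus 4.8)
The plan is to treat the two assertions separately: I would derive the cubic-twist (hence isotriviality) statement by an explicit change of variables, and then deduce torsion-freeness by Galois descent along the resulting cubic extension.

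For the first part I would exploit that in characteristic $3$ the Frobenius $x \mapsto x^3$ is additive, so the substitution $x \mapsto x + s$ turns $y^2 = x^3 + bx + t^{3^n+1}$ into $y^2 = x^3 + bx + (s^3 + bs + t^{3^n+1})$. Choosing $s$ with $g_b(s) = s^3 + bs = -t^{3^n+1}$ kills the constant term and gives an isomorphism $\psi \colon \mathcal{E}_b \to E_{n, b}$, $(x,y) \mapsto (x+s, y)$, where $\mathcal{E}_b : y^2 = x^3 + bx$ is the constant curve from \cref{lemma_sums_sigmab}. Here $s$ lies in $K' := K(s)$, the extension of $K = \F_{3^{2n}}(t)$ cut out by the cubic $s^3 + bs + t^{3^n+1}$; a pole-order count at $\infty$ (the exponent $3^n + 1$ is not divisible by $3$) shows this cubic has no root in $K$, hence is irreducible and $[K' : K] = 3$. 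Since $-b$ is a square in $\F_{3^{2n}} \subset K$, writing $\beta^2 = -b$ the three roots are $s, s \pm \beta$ and all lie in $K'$, so $K'/K$ is cyclic of degree $3$. As $\mathcal{E}_b$ is constant with $j$-invariant $0$ (indeed $c_4 = 0$ in characteristic $3$) and is $\overline{\F_3}$-isomorphic to $E' : y'^2 = x'^3 + x'$, this shows $E_{n, b}$ is a cubic twist of a constant curve, hence isotrivial.

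For torsion-freeness I would use that $\psi$ identifies $E_{n, b}(K')_{\tors}$ with $\mathcal{E}_b(K')_{\tors}$ and that descent along $K'/K$ describes $E_{n, b}(K)$ through a twisting cocycle. Writing $\tau$ for a generator of $\operatorname{Gal}(K'/K)$ with $\tau(s) = s + \beta$, the cocycle value is $\phi := \psi^{-1} \circ {}^{\tau}\psi$, which one computes to be the automorphism $\phi \colon (x,y) \mapsto (x + \beta, y)$ of $\mathcal{E}_b$ (it preserves $\mathcal{E}_b$ since $g_b(\beta) = 0$, fixes $O$, and satisfies $\phi^3 = \mathrm{id}$ as $3\beta = 0$). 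Thus $\psi(Q)$ with $Q \in \mathcal{E}_b(K')$ is $K$-rational iff $\phi(\tau Q) = Q$. Now all torsion of $\mathcal{E}_b$ over $K'$ is constant: the prime-to-$3$ part because $\mathcal{E}_b[N]$ is finite étale over $\F_{3^{2n}}$ and $K'$ has constant field $\F_{3^{2n}}$, and the $3$-part because $\mathcal{E}_b$ is supersingular (numerator $(1 + 3^n T)^2$), so $\mathcal{E}_b[3](\overline{K}) = 0$. Hence $\mathcal{E}_b(K')_{\tors} = \mathcal{E}_b(\F_{3^{2n}}) \cong (\Z/(3^n+1))^2$, on which $\tau$ acts trivially, so the descent condition becomes $\phi(Q) = Q$. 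But $(x+\beta, y) = (x,y)$ forces $\beta = 0$, so the only fixed point of $\phi$ is $O$; therefore $E_{n, b}(K)_{\tors} \cong \mathcal{E}_b(\F_{3^{2n}})^{\phi} = \{O\}$ and $E_{n, b}(K)$ is torsion-free.

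The main obstacle is precisely the torsion statement: a naive reduction-at-a-place argument only bounds the torsion by $(\Z/(3^n+1))^2$, and since every degree-one good fibre over $\F_{3^{2n}}$ is isomorphic to $\mathcal{E}_b$, it cannot by itself force triviality. The descent along the cubic extension is what pins it down, and the delicate points there are verifying that the \emph{whole} torsion over $K'$ is constant (combining supersingularity for the $3$-part with the finite-étale argument for the rest) and correctly identifying the twisting automorphism $\phi$ together with its fixed locus.
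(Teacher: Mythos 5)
Your proof is correct. For the isotriviality half it is essentially the paper's own argument --- the same additive change of variables; in fact your sign convention ($s^3+bs=-t^{3^n+1}$) is the correct one, whereas the paper's stated normalization ($u^3+bu=t^{3^n+1}$ with $x=x'+u$) leaves a residual term $2t^{3^n+1}=-t^{3^n+1}$, so one should take $x=x'-u$ there. For torsion-freeness, however, you take a genuinely different route: the paper performs no descent at all, but instead notes from \cref{prop_reduction_types} that the product of Tamagawa numbers is $c(E_{n,b}/K)=\prod_v c_v=3$, which is squarefree, and invokes proposition 6.31 of \cite{MW_Lattices_Book}, which asserts that $|E_{n,b}(K)_{\tors}|^2$ divides this product --- torsion-freeness is then immediate. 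The paper's route is three lines long but rests on the local analysis at $\infty$ (Tate's algorithm, reduction type IV) together with a quoted structural fact about component groups of Néron models; your route is self-contained apart from \cref{prop_zeta_function_superelliptic}: you exhibit the twisting cocycle $\phi\colon (x,y)\mapsto (x+\beta,y)$ explicitly, prove $\mathcal{E}_b(K')_{\tors}=\mathcal{E}_b(\F_{3^{2n}})\cong\big(\Z/(3^n+1)\Z\big)^2$ (supersingularity killing the $3$-part, étaleness forcing the prime-to-$3$ part to be constant), and observe that $\phi$ fixes only $O$. Your argument in fact yields more than the statement: no nontrivial torsion point of $E_{n,b}$ over the cubic extension $K'$ descends to $K$, and the proof makes visible why triviality holds even though every constant fibre has large torsion. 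The one step you leave implicit is that the field of constants of $K'$ is exactly $\F_{3^{2n}}$, which your prime-to-$3$ argument needs; this follows by running your own pole-order irreducibility argument over $\overline{\F_3}(t)$ (i.e.\ geometric irreducibility of the superelliptic curve), so it is a one-line patch rather than a gap.
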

\begin{proof}
The first statement is immediate from the change of variables $y = y', x = x' - u$ where $u \in \overline{\F_3(t)}$ satisfies $u^3 + bu = t^{3^n + 1}$ (this exactly defines the superelliptic curve from \cref{subsec_superelliptic_curve}). One can also see that the $j$-invariant of $E_{n, b}$ is $0$, so it must be an isotrivial elliptic curve.

We now explain why $E_{n, b}(K)$ is torsion-free. If we consider the cubic extension $K' := K(u)$ of $K$, with $u \in K$ as above, then we have an isomorphism $E'(K') \overset{\cong}{\to} E_{n, b}(K'), (x', y') \mapsto (x' - u, y')$ and $E'(K')_{\tors} = E'(\F_{3^{2n}})$ by \cite[Proposition 6.1, lecture 1]{Ulmer_elliptic_curves_over_FF}. Since $x' - u \not\in K$ whenever $x' \in \F_{3^{2n}}$, this proves that $E_{n, b}(K)$ has to be trivial.  

Alternatively, one can prove that $E_{n, b}(K)$ is torsion-free as follows: \cref{prop_reduction_types} implies that the product of the Tamagawa numbers is equal to $c(E_{n, b} / K) = \prod\limits_v c_v = 3$. In particular, this is a square-free integer. But \cite[proposition 6.31]{MW_Lattices_Book} states that $|E_{n, b}(K)_{\tors}|^2$ divides $\prod\limits_{v} c_v(E_{n, b} / K)$, so we deduce that $E_{n, b}(K)$ is torsion-free.
\end{proof}

\subsection{Lower bound on the minimal norm and on the packing density}

We start this subsection in a general framework : we let $E$ be an elliptic curve over a global function field $K=k(X)$ as in \cref{def_Reg_and_special_value}, that is, $X$ is a smooth geometrically irreducible projective curve over a finite field $k$.

One of the main features of the \emph{narrow} Mordell--Weil lattice $E(K)^0 \subset E(K)$ (\cref{def_narrow_MWL}) is that it is an \emph{even integral} lattice, and that we have an explicit lower bound on the minimal height among non-zero vectors.
\begin{thm}[Shioda]\label{thm_Lower_bound_on_minimal_norm}
Let $E$ be an elliptic curve over a global function field $K=k(X)$. Then for every $P \in E(K)^0 \minus \{0\}$ we have
\[ 		\hat h(P) 				\geq			 \frac{1}{6} 	\discr{E/K}.		\]
In particular, $E(K)^0$ is torsion-free. 
Moreover, $(E(K)^0, \hat{h})$ forms an {even integral} lattice.

Finally, the index $\left[  E(K) : E(K)^0  \right]$ divides the product $c(E/K) := \prod\limits_{v} c_v(E /K)$ of the Tamagawa numbers.
\end{thm}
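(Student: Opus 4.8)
The plan is to read every assertion off Shioda's explicit formula for the Néron--Tate height in terms of intersection theory on the Kodaira--Néron model, following the theory of Mordell--Weil lattices in \cite{MW_Lattices_Book}. First I would attach to $E/K$ its relatively minimal elliptic surface $\pi \colon \mathcal{E} \to X$, recalling that $E(K)$ is in bijection with the set of sections of $\pi$; I write $(P) \subset \mathcal{E}$ for the section attached to $P$, and $(O)$ for the zero section. The central input is Shioda's height pairing formula
\begin{equation*}
\langle P, Q \rangle \;=\; \chi + \big((P)\cdot(O)\big) + \big((Q)\cdot(O)\big) - \big((P)\cdot(Q)\big) - \sum_{v} \mathrm{contr}_v(P,Q),
\end{equation*}
where $\chi := \chi(\mathcal{O}_{\mathcal{E}})$, the terms $(\,\cdot\,)$ are intersection numbers of curves on $\mathcal{E}$, and the local corrections $\mathrm{contr}_v(P,Q) \geq 0$ depend only on which components of the fiber above $v$ the two sections meet. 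Taking $Q = P$ yields $\hat{h}(P) = 2\chi + 2\big((P)\cdot(O)\big) - \sum_v \mathrm{contr}_v(P)$.

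The key observation is that $\mathrm{contr}_v(P,Q)$ vanishes exactly when both sections meet the identity component of the fiber at $v$; by \cref{def_narrow_MWL} this is precisely the condition defining $E(K)^0$. Hence for $P, Q \in E(K)^0$ all local terms disappear, so the pairing reduces to $\langle P, Q\rangle = \chi + \big((P)\cdot(O)\big) + \big((Q)\cdot(O)\big) - \big((P)\cdot(Q)\big) \in \Z$, with $\hat h(P) = 2\chi + 2\big((P)\cdot(O)\big) \in 2\Z$; this establishes at once that $(E(K)^0, \hat h)$ is an even integral lattice. For the lower bound, I would use that two distinct irreducible sections have non-negative intersection, so $\big((P)\cdot(O)\big) \geq 0$ for $P \neq O$, giving $\hat h(P) \geq 2\chi$. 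It then remains to identify $2\chi$ with $\tfrac16\discr{E/K}$: the minimal discriminant is a global section of $\mathcal{L}^{\otimes 12}$, where $\mathcal{L}$ is the fundamental line bundle with $\deg\mathcal{L} = \chi(\mathcal{O}_{\mathcal{E}})$, so $\discr{E/K} = 12\chi$ and therefore $\hat h(P) \geq 2\chi = \tfrac16\discr{E/K}$. Torsion-freeness then follows: for non-constant $E$ one has $\discr{E/K} > 0$, so a non-zero torsion point, which satisfies $\hat h = 0$, would violate the bound; hence $E(K)^0 \cap E(K)_{\tors} = \{0\}$.

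For the last assertion I would use the standard description of $E(K)^0$ as the kernel of the specialization homomorphism $E(K) \to \bigoplus_v \Phi_v(\F_v)$ into the groups of $\F_v$-rational components of the Néron model at the finitely many places of bad reduction. By the very definition of the Tamagawa number (\cref{def_Tamagawa_nb_discr_conductor}) one has $|\Phi_v(\F_v)| = c_v(E/K)$, so $E(K)/E(K)^0$ embeds into a group of order $\prod_v c_v(E/K) = c(E/K)$, which forces $[E(K):E(K)^0]$ to divide $c(E/K)$.

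The genuine obstacle is the height pairing formula itself: establishing it requires the full apparatus of the Kodaira--Néron model, the Shioda--Tate description of the Néron--Severi group, and the case-by-case tables of the local contributions $\mathrm{contr}_v$ for each Kodaira fiber type. I would not reprove this and would instead cite \cite{MW_Lattices_Book}; granting it, every clause of the statement reduces to the elementary positivity of intersection numbers and the bookkeeping above.
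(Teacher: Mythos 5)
Your proposal is correct and takes essentially the same route as the paper: for the index claim the paper gives the identical argument (realizing $E(K)^0$ as the kernel of the specialization homomorphism into the product of the component groups $G_v$, so that $E(K)/E(K)^0$ embeds into a group of order $c(E/K)$), and for the height bound and even integrality the paper simply cites Theorem 6.44, Theorem 5.47 and Corollary 5.50 of \cite{MW_Lattices_Book}, whose proofs are precisely the unwinding via Shioda's height formula that you sketch. The only differences are harmless: you spell out what the paper delegates to the citation, and your explicit caveat that torsion-freeness requires $\discr{E/K} > 0$ (excluding the everywhere-good-reduction case) is a hypothesis the paper's statement leaves implicit.
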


\begin{proof}
For the lower bound on the minimal non-zero norm and the fact that the lattice $E(K)^0$ is even and integral, see theorem 6.44 in \cite{MW_Lattices_Book}, as well as theorem 5.47 and corollary 5.50, \emph{ibid}.

We now prove the result on the index $\left[  E(K) : E(K)^0  \right]$. 
Let $R \subset |X|$ be the set of bad places of $E$, where $|X|$ denotes the set of closed points of $X$. For each $v \in R$, let $G_v := \dfrac{   \tilde{\mathcal{E}_v}(\F_v)    }{   \tilde{\mathcal{E}_v}^0(\F_v)   }$ be the component group at $v$, where $\mathcal{E}_v$ denotes the Néron model of $E$ at $v$ and $\F_v$ is the residue field.

By definition and by \cite[corollary IV.9.2.(c)]{SilvermanAAEC}, $E(K)^0$ is the kernel of the map 
$$\theta : E(K) \longrightarrow \prod_{v \in R} G_v$$   
defined as follows: 
for each $v \in R$, there is a unique irreducible component $\Theta_{v, i(v, P)}$ of $\tilde{\mathcal{E}_v}$ that contains the image $\widetilde{P_v}$ of $P$ in $\tilde{\mathcal{E}_v}$. Then $P \longmapsto (\Theta_{v, i(v, P)})_{v \in R}$ induces the above map $\theta$.

The map $\theta$ is a group homomorphism (see lemma 6.4 in \cite{Elliptic_Surfaces_Shioda_Schutt}, or just notice that $E(K_v) \cong \mathcal{E}_v(\O_v) \to \tilde{ \mathcal{E}_v }(\F_v)$ is a morphism) and
therefore, we have an injective morphism  
\begin{equation*}\label{quot_MWL_by_narrow}
E(K) / E(K)^0   \longinto	 \prod_{v \in R} G_v,
\end{equation*}
which shows the desired divisibility.
\end{proof}

We can now give a lower bound on the sphere packing density of the narrow Mordell--Weil sublattice $E(K)^0 \subset E(K)$ (see \cref{def_center_density}).
\begin{prop}\label{prop_lower_bound_density_general_formula}
Let $E$ be an elliptic curve over a global function field $K = k(X)$, where $X$ is a smooth projective curve of genus $g_X$ over a finite field $k$.

Assume that the $L$-function of $E/K$ is of the form $L(E/K, T) = (1 - |k| T)^r$ where $r$ is the rank of $E(K) / E(K)_{\tors}$.

Then the center (sphere packing) density of the narrow Mordell--Weil lattice\linebreak $E(K)^0 \subset E(K)$ (see \cref{def_narrow_MWL,def_center_density}) is bounded below by 
\begin{equation*}
\delta\left(		E(K)^0		\right)
\;\geq\;
\dfrac{ 		\left(  \dfrac{\discr{E/K}}{24} \right)^{r / 2}  													}
		 {   	c(E/K)^{1/2} \cdot |E(K)_{\tors}|   \cdot  |k|^{g_X /2 - 1/2}  \cdot H(E/K)^{1/2} 		},
\end{equation*}
where we use the notations from \cref{def_Reg_and_special_value,conj_BSD_statements}.
\end{prop}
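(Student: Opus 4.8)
The plan is to unwind the definition of the center density, namely $\delta(E(K)^0) = \big(\lambda_1(E(K)^0)/2\big)^r \big/ \vol(\R^r/E(K)^0)$ (\cref{def_center_density}), then bound the numerator from below and the covolume from above, and finally invoke the Birch--Swinnerton-Dyer formula to trade the unknown regulator for the explicit invariants $c(E/K)$, $|E(K)_{\tors}|$, $|k|^{g_X}$ and $H(E/K)$. For the numerator I would use that the Néron--Tate pairing satisfies $\langle P, P\rangle = \hat h(P)$, so that the squared length of a lattice vector is exactly its height and $\lambda_1(E(K)^0)^2 = \min_{P \neq 0}\hat h(P)$. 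Shioda's lower bound (\cref{thm_Lower_bound_on_minimal_norm}) gives $\hat h(P) \geq \tfrac16 \discr{E/K}$ for every nonzero $P \in E(K)^0$, whence $\lambda_1(E(K)^0) \geq (\discr{E/K}/6)^{1/2}$ and
\[ \left(\frac{\lambda_1(E(K)^0)}{2}\right)^r \geq \left(\frac{\discr{E/K}}{24}\right)^{r/2}. \]

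Next I would bound the covolume in terms of the regulator. Because $\hat h$ vanishes on $E(K)_{\tors}$, the pairing descends to the lattice $E(K)/E(K)_{\tors}$, whose covolume is $\Reg(E/K)^{1/2}$ by \cref{def_Reg_and_special_value}. Since $E(K)^0$ is torsion-free (\cref{thm_Lower_bound_on_minimal_norm}), the composite $E(K)^0 \into E(K) \to E(K)/E(K)_{\tors}$ is injective and realizes $E(K)^0$ isometrically as a finite-index sublattice of $(E(K)/E(K)_{\tors}, \hat h)$. The index of this sublattice is at most $[E(K):E(K)^0]$, which divides $c(E/K)$ by \cref{thm_Lower_bound_on_minimal_norm}, so the covolume is multiplied by a factor at most $c(E/K)$:
\[ \vol(\R^r/E(K)^0) \leq c(E/K)\,\Reg(E/K)^{1/2}. \]

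The last ingredient is the BSD formula, which I would first argue applies. The hypothesis $L(E/K,T) = (1-|k|T)^r$ with $r$ the rank of $E(K)/E(K)_{\tors}$ means exactly that the order of vanishing of $L$ at $T=|k|^{-1}$ equals the algebraic rank, i.e. statement~\ref{item_BSD_equality_ranks} of \cref{conj_BSD_statements} holds; by the equivalence in \cref{thm_BSD_true_for_isotrivial} the formula~\ref{item_BSD_formula} then holds as well (and $\Sha(E/K)$ is finite). Moreover $L^*(E/K) = \left. L(E/K,T)/(1-|k|T)^r\right|_{T=|k|^{-1}} = 1$. Substituting $L^*(E/K)=1$ into \eqref{eq_BSD_formula} and solving gives $\Reg(E/K) = |E(K)_{\tors}|^2\,|k|^{g_X - 1}\,H(E/K)\big/\big(|\Sha(E/K)|\,c(E/K)\big)$, and using the trivial bound $|\Sha(E/K)| \geq 1$ yields
\[ \Reg(E/K)^{1/2} \leq \frac{|E(K)_{\tors}|\,|k|^{(g_X-1)/2}\,H(E/K)^{1/2}}{c(E/K)^{1/2}}. \]

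Combining the three displays and simplifying (the factors $c(E/K)$ and $c(E/K)^{-1/2}$ merge into $c(E/K)^{1/2}$) produces exactly the asserted inequality, since $g_X/2-1/2 = (g_X-1)/2$. The only genuinely delicate step is the second one: one must track carefully the torsion and the index $[E(K):E(K)^0]$ when passing between the three lattices $E(K)^0$, $E(K)$ and $E(K)/E(K)_{\tors}$, so that the Tamagawa factor $c(E/K)$ enters the covolume estimate with the right exponent. Everything else amounts to bookkeeping with the BSD formula and the elementary length bound.
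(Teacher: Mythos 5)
Your proposal is correct and follows essentially the same route as the paper's own proof: Shioda's bound $\hat h(P) \geq \tfrac16\discr{E/K}$ for the minimal norm, the index divisibility $[E(K):E(K)^0] \mid c(E/K)$ for the covolume, and the BSD formula (via the implication \ref{item_BSD_equality_ranks} $\implies$ \ref{item_BSD_formula} of \cref{thm_BSD_true_for_isotrivial}, with $L^*(E/K)=1$ and $|\Sha(E/K)|\geq 1$) to bound the regulator. The only difference is cosmetic: you handle the passage through $E(K)/E(K)_{\tors}$ more explicitly than the paper, which silently identifies $\covol(E(K))$ with $\Reg(E/K)^{1/2}$.
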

\begin{proof}
First of all, the hypothesis $L(E/K, T) = (1 - |k| T)^r$ implies that \hyperref[item_BSD_formula]{BSD formula} is true, by part 1 of \cref{thm_BSD_true_for_isotrivial} --- more precisely we used the implication \ref{item_BSD_equality_ranks} $\implies$ \ref{item_BSD_formula}.
This hypothesis also forces the special value of the $L$-function to be $L^*(E/K) = 1$.

Because the cardinality of the finite group $\Sha(E/K)$ is at least $1$, BSD formula allows us to get an upper bound on the discriminant of $E(K)$ :
\begin{equation}\label{eq_upper_bound_Reg}
 \Reg(E/K)
\leq
|E(K)_{\tors}|^{2}   \cdot  |k|^{g_X - 1}  \cdot H(E/K)  \cdot c(E/K)^{-1}
\end{equation}

From \cref{thm_Lower_bound_on_minimal_norm}, we have
\begin{equation*}
\lambda_1\left(		E(K)^0		\right)	
:=  
\min\Big\{ 		\hat{h}(P)^{1/2}		\tq		P \in L_n \minus \{0\} 		\Big\}
\;\geq\;
\left(			\dfrac{\discr{E/K}}{6} 		\right)^{1/2}.
\end{equation*}

Now the covolume of $E(K)^0$ is given by
\[
\covol\left(		E(K)^0		\right) 
= 
[E(K) : E(K)^0] \cdot \covol(E(K))
=
[E(K) : E(K)^0]  \cdot \Reg(E/K)^{1/2}.
\]

Using the last statement of \cref{thm_Lower_bound_on_minimal_norm}, together with \cref{eq_upper_bound_Reg}, we deduce
\begin{equation*}
\covol\left(		E(K)^0		\right) 
	\leq
c(E/K)^{1/2} \cdot |E(K)_{\tors}|   \cdot  |k|^{g_X/2 - 1/2}  \cdot H(E/K)^{1/2} 
\end{equation*}

Thereby, combining the above inequalities, we see that the center density of the lattice $L_n = E(K)^0$ is bounded below by 
\begin{equation*}
\delta\left(		E(K)^0		\right) 
\;\geq\;
\dfrac{ 		\left(  \dfrac{\discr{E/K}}{24} \right)^{r / 2}  													}
		 {   	c(E/K)^{1/2} \cdot |E(K)_{\tors}|   \cdot  |k|^{g_X/2 - 1/2}  \cdot H(E/K)^{1/2} 		},
\end{equation*}
where $r$ is the rank of lattice $E(K)^0$. Notice that the narrow Mordell--Weil\linebreak $E(K)^0 \subset E(K)$ is a \emph{full-rank} sublattice (this follows for instance from the last statement of \cref{thm_Lower_bound_on_minimal_norm} : its index in $E(K)$ is finite), so its rank is the same as the rank of $E(K)$.
\end{proof}

We can now conclude with the proof of our main corollary.
\begin{proof}[Proof of \cref{cor_packing_density_MWL}]
For ease of notation, in what follows, we write $K_n :=  \F_{3^{2n}}(t)$.

First of all, we notice that the rank of the lattice $L_n := E_{n, b}\left( K_n \right)^0$ is equal to $r = 2 \cdot 3^n$. Indeed, \cref{thm_BSD_true_for_isotrivial} and \cref{prop_E_isotrivial} imply that the BSD conjecture (item \ref{item_BSD_equality_ranks}) is fulfilled. In particular, the algebraic rank of $E_{n, b}$ over $K_n$ agrees with the analytic rank, which equals $2 \cdot 3^n$ by \cref{thm_L_function}. 

This very theorem also allows us to apply the above \cref{prop_lower_bound_density_general_formula}.
%
%
Thereby, the values from \cref{prop_reduction_types} and the last statement of \cref{prop_E_isotrivial} (namely the fact $|E_{n,b}(K_n)_{\tors}| = 1$) yield
\begin{equation*}
\delta(L_n)
\;\geq\;
\dfrac{ 		\Big(  (3^{n-1} + 1)/4	 \Big)^{3^n}  		}
		 {   	3^{1/2} \cdot  3^{n/2 \,\cdot\, (3^{n-1} - 1)}		},
\end{equation*}
which is exactly the lower bound stated in \cref{cor_packing_density_MWL}. This concludes the proof.
\end{proof}

\subsection{Discussion of the sharpness of the lower bound on the {packing} density}\label{subsec_sharpness}

In this paragraph, we shorty study sufficient conditions under which the inequality in \cref{cor_packing_density_MWL} is actually an equality.
In fact, this lower bound is sharp if and only the following conditions are all satisfied :
\begin{itemize}
\item 
The index $[E(K) : E(K)^0]$ is equal to $c(E/K)$ (instead of just dividing it, as in \cref{thm_Lower_bound_on_minimal_norm}).

\item 
The lower bound on the minimal norm form \cref{thm_Lower_bound_on_minimal_norm} is achieved, that is there is a point $P \in E(K)^0$ such that $\hat{h}(P) = \dfrac{1}{6} \discr{E/K}$, which is equal to $3^{n-1} + 1$ when $E = E_{n, b}$ according to \cref{prop_reduction_types}.

\item 
The Tate--Shafarevitch group $\Sha(E/K)$ is trivial.
\end{itemize}

\bigskip

As for the index $[E_{n, b}(K) : E_{n, b}(K)^0]$, where $K := \F_{3^{2n}}(t)$, we can prove easily that it is in fact equal to $c(E_{n, b}/K) = 3$. First, we know from the last statement of \cref{thm_Lower_bound_on_minimal_norm} that $[E_{n, b}(K) : E_{n, b}(K)^0]$ must divide $c(E_{n, b} / K) = 3$, so it is either 1 or 3. 
We prove that the index cannot be equal to 1 by noticing that the point 
$$Q_n := \left(		0, t^{ (3^n+1)/2 }		\right) \in E_{n, b}( \F_3(t) ) \into E_{n, b}(K)$$ does not belong to $E_{n, b}(K)^0$.

Indeed, if we set $\mu = \lceil (3^n+1)/6 \rceil$, then the point $Q_n$ gets mapped to the point
$(Q_n)_{\infty} := \big(    0, t^{(3^n + 1)/2 - 3 \mu}    \big)$ on the minimal integral Weierstrass model 
$(E_{n, b})_v : y^2 = x^3 + bx t^{-4 \mu} + t^{3^n + 1 -6 \mu}$ of $E_{n, b}$ at $v := \infty$ (via the map $(x,y) \mapsto (x t^{-2 \mu}, y t^{-3 \mu})$), as in \cref{prop_reduction_types}. Then $(Q_n)_{\infty}$ modulo $t^{-1}$ is the singular point $(\overline{0}, \overline{0})$ of $\overline{(E_{n, b})_v}$. 
Therefore, $Q_n \not \in E_{n, b}(K)^0$, as claimed.

\bigskip

Let us say a few words on the lower bound $\hat{h}(P) \geq  \dfrac{1}{6} \discr{E_{n, b}/K} = 3^{n-1} + 1$ for $P \in E_{n, b}(K)^0 \minus \{0\}$.
We do not know whether it is a sharp bound in general, but for $n \in \{1, 2, 3\}$ we can exhibit points that achieve this bound. We first list those points explicitly, and then briefly explain how to compute their Néron--Tate height.
\begin{itemize}[label={---}, leftmargin=*]
\item When $n=1$ and $b=1$, the point 
$$P_1 = (t^2, -t^3 + t) 		\;\in\;		 E_{1, 1}(\F_3(t)) \longinto{} E_{1,1}(K)$$ 
has Néron--Tate height $2$, i.e., $\hat{h}(P_1) = 3^0 + 1$. Notice that $P_1$ lies in the narrow Mordell--Weil sublattice, because at $v = \infty$, the point $P_1$ gets mapped to
$(1, -1+t^{-2})$ on the minimal integral Weierstrass model at $\infty$, so it reduces to the smooth point $(\overline{1}, \overline{-1})$ modulo $t^{-1}$.

\item If $n=2$, let us write $\F_{3^2} \cong \F_3[X] / (X^2 - X - 1)$ and let $z$ be the class of $X$ in $\F_{3^2}$. One can take $b := z$ since $z^{(3^n - 1)/2} = z^4 = -1$. The point 
$$P_2 := \left(  t^4 + (z+1) t^2 - 1		\; , \;		 -t^6 + t^4 - t^2 - z + 1 \right)
 \in E_{2, b}(\F_{3^2}(t)) \into E_{2, b}(\F_{3^{2n}}(t))$$
has height 4.
Again, $P_2$ lies in the narrow Mordell--Weil sublattice : its reduction modulo $t^{-1}$ is $(\overline{1}, \overline{-1})$ as for the $n=1$ case.

\item If $n=3$ and $b=1$, then
$$P_3 = \left(
t^{10} + t^{8} + t^2 \; , \; 
-t^{15} + t^{13} - t^{11} - t^7 - t^5 + t
 \right) \in E_{3, 1}(K)$$
has height 10. Moreover, as before, $P_3$ lies in the narrow Mordell--Weil sublattice.
\end{itemize}

Using theorem 6.24 of \cite{MW_Lattices_Book}, one can show that $\hat{h}(P_n) = 3^{n-1} + 1$ for $n \leq 3$ by checking that the intersection product $(P_n) \cdot (O)$ vanishes, that is, the sections $(P_n)$ and $(O)$ -- from $\Pl^1$ to the elliptic surface associated to $E_{n,b}$ -- do not intersect (we use the notations from Proposition 5.4 and Notation 5.5 in \cite{MW_Lattices_Book}).

One can argue as in the proof of Proposition 5.1 of \cite{Shioda_MW_and_sphere_packings} (even though the exact statement from there does not directly apply in characteristic 3): 
both coordinates of $P_n$ are polynomials in $t$, so have no pole on $\A^1$, and hence $(P_n)$ and $(O)$ do not intersect at any point of $\A^1$. At $v = \infty \in \Pl^1$, we let $\mu = \lceil (3^n+1)/6 \rceil$ and observe that under the map $(x(t), y(t)) \mapsto (x(t) t^{-2 \mu}, x(t) t^{-3 \mu})$, the points $P_n$ get mapped to points $(P_n)_{\infty}$ on the minimal integral Weierstrass model of $E_{n,b}$ at $v=\infty$ such that both coordinates have non-zero constant term. Hence, we see that both coordinates have no pole at $v = \infty$, and we conclude that $(P_n)$ and $(O)$ never intersect.

\bigskip

Finally, for the order of the Tate--Shafarevitch group, we can just point out that it is a 3-group, i.e., it is equal to its 3-primary part $\Sha(E_{n, b} / K) = \Sha(E_{n, b} / K)[3^{\infty}]$, where $K = \F_{3^{2n}}(t)$. This follows from \hyperref[item_BSD_formula]{BSD formula} : because $L^*(E_{n, b} / K) = 1$, $|E_{n, b}(K)_{\tors}| = 1$ and $c(E_{n, b}/K) = 3$, we have
$$|\Sha(E_{n, b}/K)| \cdot \Reg(E_{n, b}/K) 		=		
\dfrac{1}{3} \cdot		(3^{2n})^{-1 +   \tfrac{1}{12} \discr{E_{n, b} / K}}  .$$
But we have seen above that $[E_{n, b}(K) : E_{n, b}(K)^0] = 3$, and we know from \cref{thm_Lower_bound_on_minimal_norm} that $E_{n, b}(K)^0$ is an integral lattice, so it follows that $ \Reg(E_{n, b}/K) \in \frac{1}{3^2}  \Z$.

Computations on MAGMA \cite{MAGMA} seem to indicate that $\Sha(E_{n, b} / K)$ is trivial when $n=1$, but in analogy with \cite[proposition 4.3, corollary 4.6]{Shioda_MW_and_sphere_packings}, it is possible that it is non-trivial for $n$ large enough. 
\begin{rmq}
In fact, when $n=1$, i.e., when the rank is $r = 2 \cdot 3^1 = 6$, it is known that the $E_6$ lattice provides the best \emph{lattice} sphere packing in 6 dimensions \cite{Blichfeldt_optimality_E6_7_8}, and since the lower bound on the density of the lattice $E_{1, 1}(\F_{3^2}(t))^0$ agrees 
with the density of $E_6$, the lower bound from \cref{cor_packing_density_MWL} must be sharp when $n=1$, in particular $\Sha(E_{1, 1} / K)$ is trivial. 
\end{rmq}


\begin{rmq}
\begin{enumerate}[leftmargin=3ex]
\item 
We mention here that when $n \to + \infty$, we have the asymptotic lower bound
$\log_2( \delta(L_n) ) \geq 3^n \cdot n \cdot \log_2(3)	-	\frac{n \cdot 3^{n-1}}{2} \log_2(3) + o(n \cdot 3^n)$ from \cref{cor_packing_density_MWL}.
Because the rank of $L_n$ is $r = 2 \cdot 3^n$, this reads
$$\log_2( \delta(L_n) ) \geq \left( \frac{1}{2} - \frac{1}{12} \right) r \log_2(r)  +  o(r \log_2(r)),$$ which implies 
\begin{align}\label{eq_lower_asymptotic_bound_packing_density_MWL}
D(L_n) \geq 2^{ - \tfrac{1}{12} r \log_2(r) \cdot (1+o(1))} = r^{-r/ 12  \cdot  (1+o(1))},
\end{align}
where $D(L_n) \in [0,1]$ is the packing density as defined in \cref{eq_def_packing_density}. Although this is far from attaining Minkowski--Hlawka lower bound $\geq 2^{-r}$, we get the same asymptotic density as in \cite[theorem 1]{ElkiesMW1} and \cite[equation (1.12)]{Shioda_MW_and_sphere_packings}.

\item 
We point out some key properties that are shared by the family of elliptic curves studied here and the ones from \cite{ElkiesMW1, Shioda_MW_and_sphere_packings}. 
Namely, all these three families $(E_i / \F_{q_i}(t))_{i \geq 1}$ of elliptic curves (ordered by increasing conductor) are such that:
\begin{itemize}
\item The Szpiro ratio $\sigma_i := \dfrac{  \discr{E_i}  }{ f(E_i) } \sim 1$ is asymptotic to $1$, as $i \to \infty$.

\item Brumer's bound \cite[proposition 6.9]{Brumer_average_rank} is asymptotically sharp: as $i \to \infty$ we have
\[  \rk(E_i / \F_{q_i}(t))    \;\sim\;			\dfrac{ f(E_i) \log(q_i) }{ 2 \log(f(E_i)) }	 \]
\end{itemize}

In fact, using the upper bound on the Brauer--Siegel ratio stated and proved in
\cite[Theorem 1.10]{Hindry_Pacheco}, one can show that the narrow Mordell--Weil lattices $(L_i)_{i \geq 1}$ attached to any family of non-constant elliptic curves satisfying the BSD conjecture and the two properties above, will satisfy the asymptotic lower bound \eqref{eq_lower_asymptotic_bound_packing_density_MWL}, as the conductor goes to infinity.

%
%

\item 
The densities of the narrow and the full Mordell--Weil lattices compare as follows. Let $Q_n := \left(		0, t^{ (3^n+1)/2 }		\right)$ be as above.
Using theorem 6.24 and Table 6.1 (p. 127) of \cite{MW_Lattices_Book} and the fact that the reduction of $E_{n,b}$ at $v=\infty$ has type IV (\cref{prop_reduction_types}), one can show that $\hat{h}(Q_n) =  3^{n-1} + 1 - \frac{2}{3}$, using an argument similar as the one for the computations of $\hat{h}(P_n)$ above. 
Then
$$
\delta\big( E_{n,b}(\F_{3^{2n}}(t) \big) 	\leq	
\dfrac{  \big( \hat{h}(Q_n)^{1/2}  /  2 \big)^{2 \cdot 3^n}    \cdot   [E_{n,b}(\F_{3^{2n}}(t)) : L_n]  }{  \covol(L_n)  }
$$

Thus we get, because $ [E_{n,b}(\F_{3^{2n}}(t)) : L_n] = 3 $ as mentioned previously, 
\begin{align*}
\dfrac{		\delta\big( E_{n,b}(\F_{3^{2n}}(t) \big)	  }{ 		\delta(L_n)		 }
\leq
3 \cdot \left( \dfrac{\hat{h}(Q_n)}{ \lambda_1(L_n)^2 } \right)^{3^n}
\leq 
3 \cdot  \left( \dfrac{3^{n-1} + 1 - 2/3}{3^{n-1} + 1} \right)^{3^n}
=
3 \cdot  \left( 1 - \dfrac{2}{3^{n} + 3} \right)^{3^n}
\end{align*}
Thus the narrow Mordell--Weil lattice $L_n$ is always denser than the full Mordell--Weil lattice, and the ratio of the densities tends to $3 e^{-2} \simeq 0.406 $ as $n \to + \infty$. 
\qedhere
\end{enumerate}
\end{rmq}

\subsubsection*{Acknowledgments}

{\hypersetup{urlcolor=black}
I would like to thank my advisor, Prof. Maryna Viazovska, for her support and for having suggested to study this topic. I also thank Vlad Serban, Matthew de Courcy-Ireland and the anonymous referee who gave me helpful comments on an earlier version of this paper.
 
This work was funded by the Swiss National Science Foundation (SNSF), Project funding (Div. I-III), "Optimal configurations in multidimensional spaces", 
\href{http://p3.snf.ch/project-184927}{\no 184927}.
}


\footnotesize


{
\input{"On_the_Mordell-Weil_lattice_of_y2_=_x3_+_bx_+_t_3n+1_in_characteristic_3__ArXiV".nls}
}

{
\usefont{T1}{Roboto-LF}{l}{n}
\pdfbookmark{References}{bibliography}
\input{"On_the_Mordell-Weil_lattice_of_y2_=_x3_+_bx_+_t_3n+1_in_characteristic_3__ArXiV_bibliography".bbl}
}

\vfill

Gauthier~\textsc{Leterrier}, {École Polytechnique Fédérale de Lausanne (EPFL), MA B3 424, Station 8, 1015 Lausanne, Switzerland}

\textit{E-mail address} : 
\texttt{\scriptsize gauthier.leterrier \textit{at} epfl \textit{dot} ch} 
\; or \; 
\texttt{\scriptsize gauthier.leterrier \textit{at} gmail \textit{dot} com}
\end{document}